\documentclass[review,hidelinks,onefignum,onetabnum]{siamart251104}

\usepackage{lipsum}
\usepackage{amsfonts}
\usepackage{graphicx}
\usepackage{epstopdf}
\usepackage{algorithmic}
\ifpdf
  \DeclareGraphicsExtensions{.eps,.pdf,.png,.jpg}
\else
  \DeclareGraphicsExtensions{.eps}
\fi

\newsiamremark{remark}{Remark}
\newsiamremark{hypothesis}{Hypothesis}
\crefname{hypothesis}{Hypothesis}{Hypotheses}
\newsiamthm{claim}{Claim}
\newsiamremark{fact}{Fact}
\crefname{fact}{Fact}{Facts}

\headers{Invariant-domain-preserving Limiter for Ideal MHD}{C. Liu, C.-W. Shu, and X. Zhang}

\title{Efficient Admissible Set Projection in Optimization-based Invariant-Domain-Preserving Limiters for Ideal MHD\thanks{Submitted to the editors DATE.
\funding{C. Liu is partially supported by the NSF grant DMS-2513106. C.-W. Shu is partially supported by NSF grant DMS-2309249.
X. Zhang is partially supported by NSF grant DMS-2208518.}}}

\author{Chen Liu\thanks{Department of Mathematical Sciences, University of Arkansas, Fayetteville, Arkansas 72701, USA. (\email{chenl@uark.edu})}
\and Chi-Wang Shu\thanks{Division of Applied Mathematics, Brown University, Providence, Rhode Island 02912, USA. (\email{chi-wang\_shu@brown.edu})}
\and Xiangxiong Zhang\thanks{Department of Mathematics, Purdue University, West Lafayette, Indiana 47907, USA. (\email{zhan1966@purdue.edu})}
}

\usepackage{amsopn}

\ifpdf
\hypersetup{
  pdftitle={Efficient Admissible Set Projection in Optimization-based Invariant-domain-preserving Limiters for Ideal MHD},
  pdfauthor={C. Liu, C.-W. Shu, and X. Zhang}
}
\fi

\usepackage{xcolor}

\usepackage[cmintegrals]{newpxmath}  

\usepackage{bm}                      

\usepackage{dsfont}                                              
  \newcommand{\IR}{\ensuremath\mathds{R}}                        
  \newcommand{\IP}{\ensuremath\mathds{P}}                        

\newcommand*{\setE}{\ensuremath{\mathcal{T}}}                    
\renewcommand*{\vec}[1]{{\boldsymbol{#1}}}                       
\DeclareMathAlphabet{\mathbfsf}{\encodingdefault}{\sfdefault}{bx}{n}
\newcommand*{\vecc}[1]{\mathbfsf{#1}}                            
\newcommand*{\transpose}[1]{{#1}^\mathrm{T}}                     
\newcommand*{\normal}{\vec{n}}                                   
\newcommand*{\grad}{\vec{\nabla}}                                
\renewcommand*{\div}{\vec{\nabla}\cdot}                          

\newcommand*{\abs}[1]{\ensuremath{|#1|}}                         
\newcommand*{\norm}[2]{\|#1\|_{#2}}                              
\newcommand*{\on}[2]{\left.#1\right\vert_{#2}}                   
\newcommand*{\prox}{\mathtt{prox}}                               
\newcommand*{\pound}{\raisebox{0.1ex}{\scalebox{0.9}{\#}}}       

\usepackage{longtable}
\usepackage{multirow}
\usepackage{tabularx}                                            
  \newcolumntype{R}{>{\raggedleft\arraybackslash}X}
  \newcolumntype{L}{>{\raggedright\arraybackslash}X}
  \newcolumntype{C}{>{\centering\arraybackslash}X}
\usepackage{booktabs}                                            
\usepackage{rotating}                                            

\usepackage{enumitem}                                            

\begin{document}

\maketitle

\begin{abstract}
Preserving the admissible set of the ideal magnetohydrodynamics (MHD) equations is important not only for producing physically meaningful numerical solutions, but more importantly for achieving robust computations. In this paper, we develop an optimization-based limiter to enforce admissibility while preserving global conservation and accuracy. For an easy and efficient projection, we decompose the admissible set into slices parameterized by the magnetic energy, so that the MHD projection reduces to a one-dimensional minimization, which can be solved efficiently by the Brent method. The splitting method can be used to efficiently solve the global minimization problem of the optimization-based limiter, which can be used to enforce cell average admissibility in discontinuous Galerkin (DG) schemes, and pointwise admissibility can be further enforced by the Zhang--Shu positivity-preserving limiter. We apply the limiter to high-order DG schemes and present numerical results for a few representative MHD problems.

\end{abstract}

\begin{keywords}
MHD equations; invariant-domain-preserving; optimization-based limiter; Davis--Yin splitting; discontinuous Galerkin
\end{keywords}

\begin{MSCcodes}
65K05, 65K10, 65M60, 90C25
\end{MSCcodes}


\section{Introduction}
We consider the ideal compressible magnetohydrodynamics (MHD) system for a perfectly conducting fluid without viscosity or thermal conductivity on a bounded spatial domain $\Omega\subset\IR^n$ over the time interval $[0, T]$. The unknown conservative variables are density $\rho$, momentum $\vec{m}$, total energy $E$, and magnetic field $\vec{B}$, which satisfy
\begin{align}\label{eq:MHD_system}
\partial_t\begin{bmatrix}\rho \\ \vec{m} \\ E \\ \vec{B}\end{bmatrix} 
+ \div\begin{bmatrix}
\vec{m} \\
\vec{m}\otimes\vec{u} - \vec{B}\otimes\vec{B} + p_\mathrm{tot}\vecc{I} \\
(E + p_\mathrm{tot})\vec{u} - (\vec{u}\cdot\vec{B})\vec{B}\\
\vec{u}\otimes\vec{B} - \vec{B}\otimes\vec{u} 
\end{bmatrix} =
\begin{bmatrix}0 \\ \vec{0} \\ 0 \\ \vec{0}\end{bmatrix}.
\end{align}
We order the unknowns as $(\rho, \vec{m}, E, \vec{B})$, placing $\vec{B}$ last. This ordering is convenient for the set decomposition in Section~\ref{sec:projection}, where we slice the admissible set by fixing $\norm{\vec{B}}{2}^2$.
The solenoidal condition $\div{\vec{B}} = 0$ is imposed on the magnetic field.
The velocity is computed by $\vec{u} = \frac{\vec{m}}{\rho}$. The total energy is $E = \rho e + \frac{\norm{\vec{m}}{2}^2}{2\rho} + \frac{\norm{\vec{B}}{2}^2}{2}$, which consists of thermal, kinetic, and magnetic energies. Here, $e$ denotes the specific internal energy. 
The total pressure $p_\mathrm{tot} = p+\frac{1}{2}\norm{\vec{B}}{2}^2$, which is the sum of the gas pressure and magnetic pressure. The gas pressure is determined by ideal gas equation of state $p = (\gamma-1)\rho e$, where the adiabatic index $\gamma > 1$.
\par
Physically meaningful solutions to the MHD system \eqref{eq:MHD_system} should have positive density and positive internal energy. We define the admissible state set:
\begin{align*}
G = \Big\{(\rho, \vec{m}, E, \vec{B})\!:~ \rho > 0,~ E - \frac{\norm{\vec{m}}{2}^2}{2\rho} - \frac{\norm{\vec{B}}{2}^2}{2} > 0\Big\}.
\end{align*}
The set $G$ is a convex set \cite{wu2025high} and numerical schemes that preserve their solutions in this set are called invariant-domain-preserving schemes. 
Preserving this invariant domain is crucial for stability, since negative density or negative internal energy lead to loss of hyperbolicity, rendering the discrete problem ill-posed \cite{cheng2013positivity,derigs2016novel,wu2018positivity}.
In this paper, we introduce a high-order accurate optimization-based limiter to enforce conservation and preserve the invariant domain.

\subsection{Several existing approaches}
The existing invariant-domain-preserving methods for ideal MHD equations can be broadly divided into explicit positivity-preserving limiters and flux-limiting approaches. We refer to \cite{wu2025high,wu2023geometric} for a comprehensive overview.
\par
One of the most popular approaches for solving MHD equations is designed for explicit time integration. 
The Zhang--Shu positivity-preserving limiter~\cite{zhang2010maximum,zhang2010positivity} for finite volume and discontinuous Galerkin (DG) methods preserves local conservation and high-order accuracy, provided that the cell averages already lie in the admissible set, which can be proven under suitable CFL for many systems such as compressible Euler equations  \cite{wu2025high}. In~\cite{cheng2013positivity}, such a positivity-preserving limiter was applied to DG schemes. Invariant-domain-preserving DG schemes on general meshes and via geometric quasilinearization were constructed in~\cite{wu2018provably,wu2019provably,wu2022provably}. A systematic positivity analysis of schemes for MHD equations was given in~\cite{wu2018positivity}, and positivity-preserving finite difference WENO schemes with constrained transport were constructed in~\cite{christlieb2015positivity}. 
In particular, it has been proven in~\cite{wu2018positivity} that positivity of the cell averages is closely related to a discrete divergence free condition. 
\par
Another class of invariant-domain-preserving methods is based on flux limiting, which originated from the flux-corrected transport method \cite{zalesak1979fully}. 
Flux limiters have been designed and applied to phase-field models \cite{liu2022pressure,frank2020bound} and porous media flow \cite{joshaghani2022maximum}.
The modern reformulation as convex limiting provides a general framework for invariant-domain preservation for compressible Euler and Navier--Stokes equations by blending a high-order scheme with a low-order invariant-domain-preserving scheme~\cite{guermond2018second,guermond2021second}.
Extensions to DG discretizations include~\cite{hajduk2021monolithic,pazner2021sparse}. A structure-preserving convex-limiting method specifically for ideal MHD was recently developed in~\cite{dao2024structure}.
These approaches rely on the availability of a suitable low-order invariant-domain-preserving scheme for the target system, which for MHD requires additional care due to the nonlinear structure of the admissible set $G$.
High-order accuracy is typically maintained when flux limiters are applied correctly. However, a rigorous proof of accuracy preservation is available only for simpler equations~\cite{xu2014parametrized}.
\par
Optimization-based limiters are attractive because conservation and admissibility can be enforced simultaneously through a constrained minimization problem.
Different optimization-based limiters were developed for scalar equations and spectral element methods in~\cite{guba2014optimization,van2019positivity,ruppenthal2023optimal}.
A systematic two-stage limiting framework using the Douglas--Rachford (DR) splitting method to solve the minimization problem was introduced in~\cite{liu2024simple} for the phase-field multi-phase flow. Asymptotic linear convergence and nearly optimal parameter selection of the splitting iteration are rigorously proved for the scalar case.
This framework is broadly applicable and has been extended to Fokker--Planck equations~\cite{LHTZ2024FP}, compressible Navier--Stokes equations~\cite{liu2024optimization}, and most recently to compressible Euler equations for vector admissible set with $L^2$ and $L^1$ cell average limiters~\cite{liu2025efficient}.

\subsection{Optimization-based cell average limiter}
Let $\overline{\vec{U}_h}$ be the cell averages of a DG solution $\vec{U}_h = (\rho_h, \vec{m}_h, E_h, \vec{B}_h)$.
We seek a piecewise constant polynomial $\vec{X}_h^\ast$ that minimizes the $L^2$ distance to $\overline{\vec{U}_h}$ subject to constraints of preserving global conservation and numerical admissibility.
We choose the $L^2$ objective function for both efficiency and accuracy; specifically, we solve
\begin{align*}
\min_{\vec{X}_h} \norm{\vec{X}_h - \overline{\vec{U}_h}}{L^2}^2 ~\text{subject to}~ \int_\Omega \vec{X}_h = \int_\Omega \vec{U}_h ~\text{and}~ \on{\vec{X}_h}{K_i}\in G^\varepsilon ~\text{for all cells}~K_i,
\end{align*}
where, for a small $\varepsilon > 0$, the numerical admissible set of the MHD system \eqref{eq:MHD_system} is
\begin{align}\label{eq:MHD_set_Geps}
G^\varepsilon = \Big\{(\rho, \vec{m}, E, \vec{B})\!:~ \rho \geq \varepsilon,~ E - \frac{\norm{\vec{m}}{2}^2}{2\rho} - \frac{\norm{\vec{B}}{2}^2}{2} \geq \varepsilon\Big\}.
\end{align} 
Assuming the cell averages of the exact solution are feasible, then the minimizer $\vec{X}_h^\ast$ satisfies
\begin{align*}
\norm{\vec{X}_h^\ast - \overline{\vec{U}^\mathrm{exact}}}{L^2} \leq \norm{\overline{\vec{U}_h} - \overline{\vec{U}^\mathrm{exact}}}{L^2}.
\end{align*}
Here, $\overline{\vec{U}^\mathrm{exact}}$ denotes the cell averages of the exact solution. This estimate gives a quasi-optimal bound for the postprocessed cell averages relative to feasible exact cell averages. See Section~\ref{sec:limiter:accuracy} for a proof.
\par
The primary computational challenge in implementing the cell average limiter is projecting the out-of-bound cell averages onto the numerical admissible set. In the scalar case, projection onto an interval $[m, M]$ is a simple clipping operation \cite{liu2024simple,LHTZ2024FP,liu2024optimization}.
For the compressible Euler equations, its convex admissible set enjoys a relatively simple structure and the projection is computed via a closed-form formula derived using the Karush--Kuhn--Tucker (KKT) conditions~\cite{liu2025efficient}.
For the MHD equations, however, the magnetic field $\vec{B}$ appears in the total pressure, coupling $\vec{B}$ nonlinearly with $(\rho, \vec{m}, E)$ through the internal energy constraint.
This coupling makes a closed-form projection formula analytically difficult.
\par
We resolve this difficulty with a slicing algorithm that decomposes the numerical admissible set into slices parameterized by $\norm{\vec{B}}{2}^2 = \beta$: for each fixed $\beta$, the slice decouples into subproblems for $\vec{B}$ and $(\rho, \vec{m}, E)$ separately, and each of them can be solved in closed form.
The projection then reduces to a one-dimensional minimization over the closed interval $I = [\beta_{\mathrm{low}}, \beta_{\mathrm{high}}]$, where $\beta_{\mathrm{low}}$ and $\beta_{\mathrm{high}}$ are given in Lemma~\ref{lem:search_interval_I}.
We prove that this reduced objective is strictly convex and continuous on $I$, implying the existence and uniqueness of the projection, and compute the minimizer efficiently using the Brent method~\cite[Section~5]{brent2013algorithms}.
\par
Once cell averages are in the admissible set, the Zhang--Shu positivity-preserving limiter~\cite{zhang2010maximum,zhang2010positivity,liu2023positivity} can be applied to further process the quadrature point values, which also preserves the order of accuracy.
\par
Our optimization-based limiter addresses admissibility restoration only and does not enforce the discrete divergence-free condition on $\vec{B}$. In our implementation, a discrete divergence-free projection, which enforces local divergence-free on each cell, is applied at the start of each time step before the limiter acts on the cell averages. The two steps are treated separately.

\subsection{Efficient splitting methods}
In large-scale high-resolution simulations, the number of cell averages to be processed can be very large. Designing an efficient algorithm for solving the minimization problem is critical to the practical success of the optimization-based approach.
\par
The DR splitting was originally introduced to solve heat equations~\cite{peaceman1955numerical,douglas1956numerical}. It was later generalized by Lions and Mercier for handling a sum of two maximal monotone operators, resulting in the generalized DR method~\cite{lions1979splitting}.
The nonsmooth convex minimization model for the scalar cell average limiter can be efficiently solved by the generalized DR method, which is equivalent to several well-known methods, including PDHG~\cite{chambolle2016introduction}, ADMM~\cite{fortin2000augmented}, and the split Bregman method~\cite{goldstein2009split}. We refer to~\cite{demanet2016eventual,zhang2026ubiquitous} and references therein for further details on these equivalences.
However, no general parameter selection strategy is available for the generalized DR method when applied to the cell average limiter with vector invariant domain.
Moreover, the objective in \eqref{eq:invariant_domain_limiter2} consists of three parts: the quadratic distance, the conservation constraint, and the admissibility constraint. This structure motivates the use of a three-operator splitting method.
\par
Three-operator splitting methods~\cite{glowinski2017splitting,ryu2022large}, such as Davis--Yin (DY) splitting~\cite{davis2017three}, three-block ADMM~\cite{lin2015global,cai2014direct}, and others~\cite{anshika2025three}, are popular and natural choices for solving the composite convex optimization of the form
\begin{align*}
\min_x d_1(x) + d_2(x) + d_3(x).
\end{align*}
Recently, the DY method has been successfully applied to optimization-based limiters with conservation and invariant-domain constraints for the compressible Euler equations~\cite{liu2025efficient}. The DY iteration is given by:
\begin{align}\label{eq:DY_algorithm}
\text{(DY)}~
\begin{cases}
x^{k+1/2} &\hspace{-0.75em}= \prox_{d_3}^\gamma(z^k),\\
x^{k+1} &\hspace{-0.75em}= \prox_{d_1}^\gamma(2x^{k+1/2} - z^{k} - \gamma\grad{d_2}(x^{k+1/2})),\\
z^{k+1} &\hspace{-0.75em}= z^{k} + x^{k+1} - x^{k+1/2}.
\end{cases}
\end{align}
Here, the proximal operator with parameter $\gamma > 0$ for the convex function $d_1$ is defined as follows:
\begin{align*}
\prox_{d_1}^\gamma(x) 
= \mathrm{argmin}_{y} d_1(y) + \frac{1}{2\gamma}\norm{y-x}{2}^2,
\end{align*}
and $\prox_{d_3}^\gamma$ is defined similarly.
For proper closed convex functions $d_1$, $d_2$, and $d_3$, where $\grad{d_2}$ is $L$-Lipschitz continuous, iteration \eqref{eq:DY_algorithm} converges for any constant step size $\gamma\in(0, 2/L)$.
When the iteration converges to machine precision, the two constraints (conservation and invariant domain) are enforced up to round-off errors.
In practice, the DY method with $\gamma = 1/L$ outperforms other alternatives~\cite{anshika2025three} and eliminates the need for parameter tuning.

\subsection{Contributions and organization of the paper}
We are not aware of any optimization-based conservative and invariant-domain-preserving limiter for the MHD equations. The nonlinear coupling of all variables $(\rho, \vec{m}, E, \vec{B})$ in the admissible set $G$ makes the projection significantly more challenging to compute than in the scalar and compressible Euler cases. Our main contribution is a slicing algorithm for computing the projection of cell averages onto the MHD numerical admissible set $G^\varepsilon$.
By decomposing $G^\varepsilon$ into slices parameterized by $\norm{\vec{B}}{2}^2 = \beta$, the projection reduces to a one-dimensional minimization over a bounded interval.
We prove that the reduced objective is strictly convex and continuous, guaranteeing the existence and uniqueness of the projection. The minimizer is computed efficiently using the Brent method.
\par
We prove that our $L^2$ optimization-based cell average limiter preserves the order of accuracy of the underlying DG scheme.
Furthermore, the global optimization problem is solved efficiently by the DY splitting method, with each iteration requiring one evaluation of the slicing algorithm and one projection onto the conservation hyperplane in closed form.
The asymptotic linear convergence is observed in our numerical tests and the DY iteration converges in several steps.
\par
We test the correctness and efficiency of the slicing algorithm for computing the projection point, and we apply the full method to the smooth circularly polarized Alfv\'en wave, Rotor problem, Orszag--Tang vortex problem, and high Mach number astrophysical jet.
The numerical results confirm the correctness, high-order accuracy, and robustness of the proposed scheme on demanding MHD problems with low density and low pressure.
\par
The rest of this paper is organized as follows.
In Section~\ref{sec:limiter}, we formulate the optimization-based cell average limiter, prove accuracy preservation, and present the DY splitting algorithm including the identification of the three operators and their proximals.
Section~\ref{sec:projection} develops the slicing algorithm for projecting onto the MHD admissible set, establishes the strict convexity and continuity of the reduced objective, and details the closed-form projection formulas for each fixed slice. 
We present the numerical benchmark tests in Section~\ref{sec:numerical_experiments}. 
The explicit projection algorithm onto the compressible Euler-like admissible set and the Brent method are given in the Appendices.

\section{Constraint optimization-based limiter}\label{sec:limiter}
\!We propose a postprocessing step that restores the invariant domain while preserving global conservation. The postprocessed piecewise constant polynomial is chosen to minimize the $L^2$ distance to the original cell averages, subject to these two constraints.
\par
Given a numerical solution $\vec{U}_h$ to a conservative DG scheme for MHD equations \eqref{eq:MHD_system}, where $\vec{U}_h = (\rho_h, \vec{m}_h, E_h, \vec{B}_h)$, let $\overline{\vec{U}_h}$ denote the cell average of $\vec{U}_h$, namely, on each cell $K$, we have $\overline{\vec{U}_h}|_{K} = \frac{1}{\abs{K}}\int_K \vec{U}_h$. 
We seek a piecewise constant polynomial $\vec{X}_h$ that minimizes the distance to $\overline{\vec{U}_h}$ under the constraints of preserving conservation and invariant domain:
\begin{align}\label{eq:introduction_opt_model}
\min_{\vec{X}_h} \norm{\vec{X}_h - \overline{\vec{U}_h}}{L^2}^2 ~\text{subject to}~
\int_\Omega \vec{X}_h = \int_\Omega \vec{U}_h ~\text{and}~ 
\on{\vec{X}_h}{K_i}\in G^\varepsilon ~\text{for all cells}~K_i,
\end{align}
where $G^\varepsilon$ is the numerical admissible set defined in \eqref{eq:MHD_set_Geps}.
For positive density, the set $G^\varepsilon$ is closed and convex, which implies that \eqref{eq:introduction_opt_model} has a unique minimizer, denoted by $\vec{X}_h^\ast$. Then, the postprocessed DG polynomial can be written as 
\begin{align*}
\widehat{\vec{U}}_h = (\vec{U}_h - \overline{\vec{U}_h}) + \vec{X}_h^\ast,
\end{align*}
and it preserves global conservation of density, momentum, total energy, and magnetic field. In addition, $\widehat{\vec{U}}_h$ has cell averages in set $G^\varepsilon$.

\subsection{Matrix-vector form and accuracy}\label{sec:limiter:accuracy}
To solve \eqref{eq:introduction_opt_model}, we introduce a matrix $\overline{\vecc{U}} \in \IR^{N\times(2+2n)}$ to store the cell averages of $\vec{U}_h$, where the $i$-th row is given by 
\begin{align*}
\begin{bmatrix}
\displaystyle \frac{1}{\abs{K_i}}\int_{K_i} \rho_h & 
\displaystyle \frac{1}{\abs{K_i}}\int_{K_i} \vec{m}_h & 
\displaystyle \frac{1}{\abs{K_i}}\int_{K_i} E_h & 
\displaystyle \frac{1}{\abs{K_i}}\int_{K_i} \vec{B}_h
\end{bmatrix}.
\end{align*}
Let $\norm{\cdot}{F}$ denote the Frobenius norm and $N$ be the total number of mesh cells. 
Define the indicator function of a set $\Lambda$ as: $\iota_\Lambda(\vecc{X}) = 0$, if $\vecc{X}\in \Lambda$, otherwise $\iota_\Lambda(\vecc{X}) = +\infty$.
The model \eqref{eq:introduction_opt_model} is equivalent to the following unconstrained minimization problem: find $\vecc{X}\in\IR^{N\times(2+2n)}$, such that
\begin{align}\label{eq:invariant_domain_limiter2}
\min_{\vecc{X}}~~& \frac{1}{2}\norm{\vecc{X} - \overline{\vecc{U}}}{F}^2 + \iota_{\Lambda_1}(\vecc{X}) + \iota_{\Lambda_2}(\vecc{X}),\nonumber\\
\text{where}~~&
\Lambda_1 = \{\vecc{X}\!:~ \vecc{A}\vecc{X} = \transpose{\vec{b}}\}\\
\text{and}~~&
\Lambda_2 = \{\vecc{X}\!:~ \text{the $i$-th row}~\vecc{X}_i \in G^\varepsilon,~ \forall i\}.\nonumber
\end{align}
For simplicity, in the rest of this paper we only consider uniform meshes. On a uniform mesh, we have $\vecc{A} = [1,1,\cdots,1]\in\IR^{1\times N}$ and $\transpose{\vec{b}} = \vecc{A}\overline{\vecc{U}}$. 
Since both sets $\Lambda_1$ and $\Lambda_2$ are convex, their corresponding indicator functions $\iota_{\Lambda_1}$ and $\iota_{\Lambda_2}$ are also convex. Therefore, \eqref{eq:invariant_domain_limiter2} defines a strongly convex minimization problem, and its solution is unique.
\par
Next, we show that the limiter does not destroy the order of accuracy. Let $\vecc{X}^\ast$ denote the solution of \eqref{eq:invariant_domain_limiter2} and let $\overline{\vecc{U}^\mathrm{exact}}\in \IR^{N\times (2+2n)}$ be a matrix that stores the cell averages of the exact solution, where the $i^\mathrm{th}$ row is given by
\begin{align*}
\begin{bmatrix}
\displaystyle \frac{1}{\abs{K_i}}\int_{K_i} \overline{\rho_h^\mathrm{exact}} & 
\displaystyle \frac{1}{\abs{K_i}}\int_{K_i} \overline{\vec{m}_h^\mathrm{exact}} & 
\displaystyle \frac{1}{\abs{K_i}}\int_{K_i} \overline{E_h^\mathrm{exact}} & 
\displaystyle \frac{1}{\abs{K_i}}\int_{K_i} \overline{\vec{B}_h^\mathrm{exact}}
\end{bmatrix}.
\end{align*}
Following a similar argument as in \cite[Theorem~1]{liu2025efficient}, the sets $\Lambda_1$ and $\Lambda_2$ are convex and closed, which gives $\Lambda_1\cap\Lambda_2$ is a convex closed set. 
Thus, $\overline{\vecc{U}^\mathrm{exact}}$ and $\vecc{X}^\ast$ belong to $\Lambda_1\cap\Lambda_2$ implies $\lambda \overline{\vecc{U}^\mathrm{exact}} + (1-\lambda)\vecc{X}^\ast \in \Lambda_1\cap\Lambda_2$, for any $\lambda\in[0,1]$. 
Define
\begin{align*}
\phi(\lambda) &= \norm{\overline{\vecc{U}} - (\lambda \overline{\vecc{U}^\mathrm{exact}} + (1-\lambda)\vecc{X}^\ast)}{F}^2 \nonumber\\
&= \lambda^2\norm{\overline{\vecc{U}^\mathrm{exact}} - \vecc{X}^\ast}{F}^2 - 2\lambda(\overline{\vecc{U}} - \vecc{X}^\ast):(\overline{\vecc{U}^\mathrm{exact}} - \vecc{X}^\ast) + \norm{\overline{\vecc{U}} - \vecc{X}^\ast}{F}^2.
\end{align*}
If $\vecc{X}^\ast \neq \overline{\vecc{U}^\mathrm{exact}}$, then $\phi(\lambda)$ is a quadratic function. From \eqref{eq:invariant_domain_limiter2}, we know $\vecc{X}^\ast$ minimizes $\norm{\overline{\vecc{U}}-\vecc{X}}{F}^2$ for all $\vecc{X} \in \Lambda_1\cap\Lambda_2$. Thus, $\phi(\lambda)$ achieves its minimum at $\lambda = 0$, which gives
\begin{align*}
\frac{(\overline{\vecc{U}} - \vecc{X}^\ast):(\overline{\vecc{U}^\mathrm{exact}} - \vecc{X}^\ast)}{\norm{\overline{\vecc{U}^\mathrm{exact}} - \vecc{X}^\ast}{F}^2} \leq 0
\quad\Rightarrow\quad
(\vecc{X}^\ast - \overline{\vecc{U}}):(\overline{\vecc{U}^\mathrm{exact}} - \vecc{X}^\ast) \geq 0.
\end{align*}
Thus, we obtain 
\begin{align*}
\norm{\overline{\vecc{U}^\mathrm{exact}}-\overline{\vecc{U}}}{F}^2
&= \norm{\overline{\vecc{U}^\mathrm{exact}}-\vecc{X}^\ast}{F}^2 + 2(\vecc{X}^\ast - \overline{\vecc{U}}):(\overline{\vecc{U}^\mathrm{exact}}- \vecc{X}^\ast) + \norm{\vecc{X}^\ast - \overline{\vecc{U}}}{F}^2 \\
&\geq \norm{\overline{\vecc{U}^\mathrm{exact}}-\vecc{X}^\ast}{F}^2.
\end{align*}
It is straightforward to verify that when $\vecc{X}^\ast = \overline{\vecc{U}^\mathrm{exact}}$, the same inequality still holds. Therefore, the postprocessed cell average satisfies the same quasi-optimal bound when the cell averages of the exact solution are in the numerical admissible set $G^\varepsilon$.

\subsection{Efficient optimization method}\label{sec:efficient_solver}
We apply the DY method \eqref{eq:DY_algorithm} to solve the minimization problem \eqref{eq:invariant_domain_limiter2}. 
\par
Partition the matrix $\overline{\vecc{U}} = [\vec{u}, \vec{v}_1, \cdots, \vec{v}_n, \vec{w}, \vec{z}_1, \cdots, \vec{z}_n]$ into $2+2n$ columns, corresponding, in order to the cell averages of density, the $n$ momentum components, the total energy, and the $n$ magnetic field components.
Denote the entries in vector $\transpose{\vec{b}}$ by $[b_\rho, b_{m_1}, \cdots, b_{m_n}, b_E, b_{B_1}, \cdots, b_{B_n}]$.
Note that $\overline{\vecc{U}}$ and $\vec{b}$ are given quantities, which are computed from the DG solution. For the unknown $\vecc{X}$ in \eqref{eq:invariant_domain_limiter2}, we similarly define the partition $\vecc{X} = [\vec{\rho}, \vec{m}_1, \cdots, \vec{m}_n, \vec{E}, \vec{B}_1, \cdots, \vec{B}_n]$.
Then, the minimization problem \eqref{eq:invariant_domain_limiter2} is equivalent to
\begin{align*}
\min_{\vecc{X}}&~ \frac{1}{2}\Big(\norm{\vec{\rho} - \vec{u}}{2}^2 + \sum_{i=1}^{n}\norm{\vec{m}_i - \vec{v}_i}{2}^2 + \norm{\vec{E} - \vec{w}}{2}^2 + \sum_{i=1}^{n}\norm{\vec{B}_i - \vec{z}_i}{2}^2\Big) + \iota_{\Lambda_1}(\vecc{X}) + \iota_{\Lambda_2}(\vecc{X}),\nonumber\\
\text{where}&~ \Lambda_1 = \{\vecc{X}\!:\,\vecc{A}\vec{\rho} = b_\rho,~ \vecc{A}\vec{m}_1 = b_{m_1},~ \cdots, \vecc{A}\vec{m}_n = b_{m_n},~\\ &\hspace{5.85cm} \vecc{A}\vec{E} = b_E,~ \vecc{A}\vec{B}_1 = b_{B_1},~ \cdots, \vecc{A}\vec{B}_n = b_{B_n}\}\\
\text{and}&~ \Lambda_2 = \{\vecc{X}\!:\,\transpose{[\rho_i, m_{1i}, \cdots, m_{ni}, E_i, B_{1i}, \cdots, B_{ni}]} \in G^\varepsilon,~ \forall i\}.\nonumber
\end{align*}
Let us split the objective function in a manner that facilitates the derivation of explicit formulas. We choose
\begin{subequations}
\begin{align}
d_1(\vecc{X}) &= \iota_{\Lambda_1}(\vec{\rho}, \vec{m}_1, \cdots, \vec{m}_n, \vec{E}, \vec{B}_1, \cdots, \vec{B}_n), \label{eq:MHD_DY_f} \\
d_2(\vecc{X}) &= \frac{1}{2}\Big(\norm{\vec{\rho} - \vec{u}}{2}^2 + \sum_{i=1}^{n}\norm{\vec{m}_i - \vec{v}_i}{2}^2 + \norm{\vec{E} - \vec{w}}{2}^2 + \sum_{i=1}^{n}\norm{\vec{B}_i - \vec{z}_i}{2}^2\Big),\label{eq:MHD_DY_h} \\
d_3(\vecc{X}) &= \iota_{\Lambda_2}(\vec{\rho}, \vec{m}_1, \cdots, \vec{m}_n, \vec{E}, \vec{B}_1, \cdots, \vec{B}_n). \label{eq:MHD_DY_g}
\end{align}
\end{subequations}
Let $\vecc{A}^+ = \transpose{\vecc{A}}(\vecc{A}\transpose{\vecc{A}})^{-1}$ denote the pseudo inverse of $\vecc{A}$. Associated to function $d_1$ in \eqref{eq:MHD_DY_f}, the proximal $\prox_{d_1}^\gamma$ maps
\begin{align*}
\vec{\rho} &\rightarrow \vecc{A}^+(b_\rho - \vecc{A}\vec{\rho}) + \vec{\rho}, & 
\vec{m}_i &\rightarrow \vecc{A}^+(b_{m_i} - \vecc{A}\vec{m}_i) + \vec{m}_i, &
\text{for}~i=1,\cdots,n,\\
\vec{E} &\rightarrow \vecc{A}^+(b_E - \vecc{A}\vec{E}) + \vec{E}, & 
\vec{B}_i &\rightarrow \vecc{A}^+(b_{B_i} - \vecc{A}\vec{B}_i) + \vec{B}_i, &
\text{for}~i=1,\cdots,n.
\end{align*}
The proximal of an indicator of a set is the Euclidean projection onto that set. 
Thus, computing the proximal for $d_3$ in \eqref{eq:MHD_DY_g} reduces to finding the projection point onto the numerical admissible set $G^\varepsilon$.

\section{Projection onto admissible set}\label{sec:projection}
In this part, we focus on computing the projection of a given point $(u, \vec{v}, w, \vec{z})$ onto the numerical admissible set $G^\varepsilon$ defined in \eqref{eq:MHD_set_Geps} for the MHD system.
The projection of a point on the nonempty closed convex set is unique. Determining this projection reduces to solving the following constraint minimization problem:
\begin{subequations}\label{eq:problem_org}
\begin{align}
\min_{\rho, \vec{m}, E, \vec{B}}&~~ \frac{1}{2}(\abs{\rho-u}^2 + \norm{\vec{m}-\vec{v}}{2}^2 + \abs{E-w}^2 + \norm{\vec{B}-\vec{z}}{2}^2) \label{eq:problem_org_1}\\
\text{subject to}&~~ \rho\geq\varepsilon ~~\text{and}~~ E - \frac{\norm{\vec{m}}{2}^2}{2\rho} - \frac{\norm{\vec{B}}{2}^2}{2} \geq \varepsilon. \label{eq:problem_org_2}
\end{align}
\end{subequations}
A natural starting point to address optimization \eqref{eq:problem_org} is to formulate the KKT conditions. The constraints \eqref{eq:problem_org_2} represent primal feasibility. The parameters $\lambda \geq 0$ and $\mu \geq 0$ represent dual feasibility. We have the stationarity conditions
\begin{subequations}\label{eq:KKT_stationarity}
\begin{align}
\rho - u - \lambda - \mu\frac{\norm{\vec{m}}{2}^2}{2\rho^2} = 0, &&
\vec{m} - \vec{v} + \mu \frac{\vec{m}}{\rho} = \vec{0}, \label{eq:KKT_stationarity_1} \\
E - w - \mu = 0, &&
\vec{B} - \vec{z} + \mu \vec{B} = \vec{0}. \label{eq:KKT_stationarity_2}
\end{align}
\end{subequations}
And the complementary slackness
\begin{align*}
\lambda(\varepsilon - \rho) = 0, &&
\mu\Big(\varepsilon - E + \frac{\norm{\vec{m}}{2}^2}{2\rho} + \frac{\norm{\vec{B}}{2}^2}{2}\Big) = 0.
\end{align*}
Notice, if $\vec{z} = \vec{0}$, then \eqref{eq:KKT_stationarity_2} gives $(1 + \mu)\vec{B} = \vec{0}$. By dual feasibility, we know $1+\mu>0$, which implies $\vec{B} = \vec{0}$.
Thus, when $\vec{z} = \vec{0}$, the minimization problem \eqref{eq:problem_org} reduces to
\begin{subequations}\label{eq:problem_org_z0}
\begin{align}
\min_{\rho, \vec{m}, E}&~~ \frac{1}{2}(\abs{\rho-u}^2 + \norm{\vec{m}-\vec{v}}{2}^2 + \abs{E-w}^2)\\
\text{subject to}&~~ \rho\geq\varepsilon ~~\text{and}~~ E - \frac{\norm{\vec{m}}{2}^2}{2\rho} \geq \varepsilon,
\end{align}
\end{subequations}
which has been solved in \cite{liu2025efficient}. 
Therefore, in the rest of this paper, we only need to focus on the case that $\vec{z} \neq \vec{0}$. 
Unlike projection onto the admissible set for Euler equations in \cite{liu2025efficient}, the KKT conditions associated to the MHD system have no tractable closed-form solution. This motivates the numerical construction below.

\subsection{Preliminary lemmas}
Let $g$ be a continuous convex function, and let $G$ be a convex set. Suppose $G$ admits a decomposition parameterized by $\beta$, namely $G = \cup_{\beta} G_\beta$, where each set $G_\beta$ is `nice' in the sense that the minimum of $g$ over $G_\beta$ exists. 
\begin{lemma}\label{lem:double_min}
Suppose that both $\displaystyle\min_{\vec{x}\in G} g(\vec{x})$ and $\displaystyle\min_\beta\min_{\vec{x}\in G_\beta}g(\vec{x})$ exist, then we have
\begin{align}\label{eq:double_min}
\min_{\vec{x}\in G} g(\vec{x}) = \min_\beta\min_{\vec{x}\in G_\beta}g(\vec{x}).
\end{align}
\end{lemma}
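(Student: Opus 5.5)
The plan is to prove \eqref{eq:double_min} by establishing the two inequalities $\le$ and $\ge$ separately. Only the set identity $G=\cup_\beta G_\beta$ and the assumed existence of the three minima are needed. Convexity of $g$ and $G$ plays no role here; it only matters later, when the slices and the reduced one-dimensional objective are analyzed.

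Write $m^\ast=\min_{\vec{x}\in G} g(\vec{x})$, attained at some $\vec{x}^\ast\in G$. For each admissible $\beta$ let $h(\beta)=\min_{\vec{x}\in G_\beta} g(\vec{x})$. By hypothesis the outer minimum $\min_\beta h(\beta)$ is attained at some $\beta^\ast$, with a minimizer $\vec{y}^\ast\in G_{\beta^\ast}$ of the inner problem.

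First, since $G_{\beta^\ast}\subseteq G$, we have $\vec{y}^\ast\in G$. Hence
\begin{align*}
m^\ast \le g(\vec{y}^\ast) = h(\beta^\ast) = \min_\beta\min_{\vec{x}\in G_\beta} g(\vec{x}).
\end{align*}
Conversely, because $\vec{x}^\ast\in G=\cup_\beta G_\beta$, there is some $\beta_0$ with $\vec{x}^\ast\in G_{\beta_0}$. Then
\begin{align*}
\min_\beta h(\beta)\le h(\beta_0)=\min_{\vec{x}\in G_{\beta_0}} g(\vec{x}) \le g(\vec{x}^\ast)=m^\ast.
\end{align*}
Combining the two inequalities gives \eqref{eq:double_min}.

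There is no real obstacle here. The only care needed is bookkeeping: the inner minimum $h(\beta_0)$ must exist for the particular $\beta_0$ used in the second inequality. This is covered by the standing assumption that each slice $G_\beta$ is `nice', i.e.\ $g$ attains its minimum over every $G_\beta$. It is also implicit in the hypothesis that $\min_\beta\min_{\vec{x}\in G_\beta} g(\vec{x})$ exists, which presupposes that $h$ is defined on all parameters. I will state this explicitly so the argument does not silently use an infimum in place of a minimum.
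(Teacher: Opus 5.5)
Your proposal is correct and matches the paper's proof: you establish the same two inequalities, using $G_{\beta^\ast}\subseteq G$ for one direction, and for the other $\vec{x}^\ast\in G_{\beta_0}$ together with the standing assumption that $g$ attains its minimum on every slice $G_\beta$. You are also right to state that assumption explicitly, since the paper invokes it at exactly that step.
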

\begin{proof}
\emph{i}) The right-hand side of \eqref{eq:double_min} is attainable, i.e. there exists $\beta = \beta^\ast$, when $\vec{x} = \vec{x}^\ast \in G_{\beta^\ast} \subset G$, we have $\displaystyle g(\vec{x}^\ast) = \min_{\beta}\min_{\vec{x}\in G_\beta}g(\vec{x})$.
Since $\vec{x}^\ast\in G$, then $\displaystyle \min_{\vec{x}\in G}g(\vec{x}) \leq g(\vec{x}^\ast)$. We obtain $\displaystyle \min_{\vec{x}\in G}g(\vec{x}) \leq \min_\beta\min_{\vec{x}\in G_\beta}g(\vec{x})$.
\emph{ii}) The left-hand side of \eqref{eq:double_min} is attainable, i.e. there exists $\vec{x}^\ast\in G$, such that $\displaystyle g(\vec{x}^\ast) = \min_{\vec{x}\in G} g(\vec{x})$.
The $\vec{x}^\ast \in G = \cup_\beta G_\beta$ implies that there exists $\beta = \beta^\ast$, such that $\vec{x}^\ast \in G_{\beta^\ast}$. Notice that the minimum of $g$ over $G_{\beta^\ast}$ exists. We have $\displaystyle\min_{\vec{x}\in G_{\beta^\ast}} g(\vec{x}) \leq g(\vec{y})$ holds for any $\vec{y}\in G_{\beta^\ast}$.
Pick $\vec{y} = \vec{x}^\ast$, we get $\displaystyle \min_{\vec{x}\in G_{\beta^\ast}} g(\vec{x}) \leq g(\vec{x}^\ast)$.
Thus, we obtain $\displaystyle \min_{\beta} \min_{\vec{x}\in G_{\beta}} g(\vec{x}) \leq \min_{\vec{x}\in G_{\beta^\ast}} g(\vec{x}) \leq g(\vec{x}^\ast) = \min_{\vec{x}\in G} g(\vec{x})$.
Combining \emph{i}) and \emph{ii}), we conclude the proof.
\end{proof}
\begin{lemma}\label{lem:lem_decouple}
Suppose that all minima appearing below are attainable, then we have  
\begin{align}\label{eq:lem_decouple}
\min_{(\vec{x},\vec{y})\in F\times H} f(\vec{x}) + h(\vec{y}) = \min_{\vec{x}\in F} f(\vec{x}) + \min_{\vec{y}\in H} h(\vec{y}).
\end{align}
Here, $F\times H$ is the product of sets $F$ and $H$, namely $(\vec{x},\vec{y})\in F\times H \Leftrightarrow \vec{x}\in F$ and $\vec{y}\in H$.
\end{lemma}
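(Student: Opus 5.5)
The plan is to prove the two inequalities separately, in the same spirit as the proof of Lemma~\ref{lem:double_min}, using only the attainability assumptions and the product structure of $F\times H$.

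By hypothesis, the minima on the right-hand side of \eqref{eq:lem_decouple} are attained. So there are $\vec{x}^\ast\in F$ and $\vec{y}^\ast\in H$ with $f(\vec{x}^\ast)=\min_{\vec{x}\in F}f(\vec{x})$ and $h(\vec{y}^\ast)=\min_{\vec{y}\in H}h(\vec{y})$. Likewise, the minimum on the left-hand side is attained at some pair $(\hat{\vec{x}},\hat{\vec{y}})\in F\times H$.

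\emph{Step i} ($\leq$). By the definition of the product set, $\vec{x}^\ast\in F$ and $\vec{y}^\ast\in H$ give $(\vec{x}^\ast,\vec{y}^\ast)\in F\times H$. Hence
\[
\min_{(\vec{x},\vec{y})\in F\times H} f(\vec{x})+h(\vec{y}) \leq f(\vec{x}^\ast)+h(\vec{y}^\ast) = \min_{\vec{x}\in F} f(\vec{x}) + \min_{\vec{y}\in H} h(\vec{y}).
\]

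\emph{Step ii} ($\geq$). Since $(\hat{\vec{x}},\hat{\vec{y}})\in F\times H$, we have $\hat{\vec{x}}\in F$ and $\hat{\vec{y}}\in H$. Therefore $f(\hat{\vec{x}})\geq \min_{\vec{x}\in F}f(\vec{x})$ and $h(\hat{\vec{y}})\geq \min_{\vec{y}\in H}h(\vec{y})$. Adding these two bounds shows that the left-hand side of \eqref{eq:lem_decouple}, which equals $f(\hat{\vec{x}})+h(\hat{\vec{y}})$, is at least the right-hand side.

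Combining Steps i and ii gives \eqref{eq:lem_decouple}.

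There is no substantive obstacle here. The one point needing care is that both directions rely on the product structure: membership in $F\times H$ is equivalent to componentwise membership, and the objective is separable, so the two minimizations do not interact. Attainability is assumed, so no infimum arguments are needed. Without it, the same argument would still work with infima in place of minima, using approximate minimizers.
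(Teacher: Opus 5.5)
Your proof is correct and follows essentially the same route as the paper's. You get $\leq$ by pairing the separate minimizers $(\vec{x}^\ast,\vec{y}^\ast)\in F\times H$, and $\geq$ by splitting the joint minimizer into its components and bounding each term below by its own minimum.
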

\begin{proof}
\emph{i}) The right-hand side of \eqref{eq:lem_decouple} is attainable, i.e. there exists $\vec{x}^\ast\in F$, such that $\displaystyle f(\vec{x}^\ast) = \min_{\vec{x}\in F} f(\vec{x})$, and there exists $\vec{y}^\ast\in H$, such that $\displaystyle h(\vec{y}^\ast) = \min_{\vec{y}\in H} h(\vec{y})$.
Since $(\vec{x}^\ast, \vec{y}^\ast)\in F\times H$, we obtain
\begin{align*}
\min_{(\vec{x},\vec{y})\in F\times H} f(\vec{x}) + h(\vec{y}) \leq f(\vec{x}^\ast) + h(\vec{y}^\ast) 
= \min_{\vec{x}\in F} f(\vec{x}) + \min_{\vec{y}\in H} h(\vec{y}).
\end{align*}
\emph{ii}) The left-hand side of \eqref{eq:lem_decouple} is attainable, i.e. there exist $(\vec{x}^\ast, \vec{y}^\ast) \in F\times H$, such that $\displaystyle f(\vec{x}^\ast) + h(\vec{y}^\ast) = \min_{(\vec{x},\vec{y})\in F\times H} f(\vec{x}) + h(\vec{y})$.
Notice that $(\vec{x}^\ast, \vec{y}^\ast)\in F\times H$ implies $\vec{x}^\ast\in F$ and $\vec{y}^\ast \in H$. We have $\displaystyle \min_{\vec{x}\in F} f(\vec{x}) \leq f(\vec{x}^\ast)$ and $\displaystyle \min_{\vec{y}\in H} h(\vec{y}) \leq h(\vec{y}^\ast)$.
Thus, we obtain
\begin{align*}
\min_{\vec{x}\in F} f(\vec{x}) + \min_{\vec{y}\in H} h(\vec{y}) 
\leq f(\vec{x}^\ast) + h(\vec{y}^\ast) 
= \min_{(\vec{x},\vec{y})\in F\times H} f(\vec{x}) + h(\vec{y}).
\end{align*}
Combining \emph{i}) and \emph{ii}), we conclude the proof.
\end{proof}
Applying Lemma~\ref{lem:double_min} and Lemma~\ref{lem:lem_decouple} gives the decoupling of the minimization problem \eqref{eq:problem_org}. We first carry out the formal decoupling and then verify that the conditions of these lemmas are satisfied, namely that all relevant minima exist.

\subsection{Decoupling of magnetic variable}
Consider decomposing the admissible set $G^\varepsilon$ into the union of sets $G^\varepsilon(\norm{\vec{B}}{2}^2 = \beta)$, where parameter $\beta\geq 0$,
\begin{align*}
G^\varepsilon(\norm{\vec{B}}{2}^2 = \beta) &= \Big\{(\rho, \vec{m}, E, \vec{B})\!:~ \rho \geq \varepsilon,~ E - \frac{\norm{\vec{m}}{2}^2}{2\rho} - \frac{\norm{\vec{B}}{2}^2}{2} \geq \varepsilon,~ \norm{\vec{B}}{2}^2 = \beta \Big\}.
\end{align*}
By Lemma~\ref{lem:double_min}, the constraint minimization problem \eqref{eq:problem_org} is equivalent to the following
\begin{align}\label{eq:problem_2}
\min_{\beta\geq0} \min_{(\rho, \vec{m}, E, \vec{B}) \in G^\varepsilon(\norm{\vec{B}}{2}^2 = \beta)}~ \frac{1}{2}(\abs{\rho-u}^2 + \norm{\vec{m}-\vec{v}}{2}^2 + \abs{E-w}^2 + \norm{\vec{B}-\vec{z}}{2}^2).
\end{align}
Recall the definition of set $G^\varepsilon(\norm{\vec{B}}{2}^2 = \beta)$, the inner minimization problem above is
\begin{align*}
\min_{\rho, \vec{m}, E, \vec{B}}&~~ \frac{1}{2}(\abs{\rho-u}^2 + \norm{\vec{m}-\vec{v}}{2}^2 + \abs{E-w}^2 + \norm{\vec{B}-\vec{z}}{2}^2) \\
\text{subject to}&~~ \rho\geq\varepsilon,~~ E - \frac{\norm{\vec{m}}{2}^2}{2\rho} - \frac{\norm{\vec{B}}{2}^2}{2} \geq \varepsilon, ~~\text{and}~~ \norm{\vec{B}}{2}^2 = \beta.
\end{align*}
Substituting the third constraint $\norm{\vec{B}}{2}^2 = \beta$ into the second constraint, we obtain
the following equivalent form.
\begin{align*}
\min_{\rho, \vec{m}, E, \vec{B}}&~~ \frac{1}{2}(\abs{\rho-u}^2 + \norm{\vec{m}-\vec{v}}{2}^2 + \abs{E-w}^2) + \frac{1}{2}\norm{\vec{B}-\vec{z}}{2}^2 \\
\text{subject to}&~~ \rho\geq\varepsilon,~~ E - \frac{\norm{\vec{m}}{2}^2}{2\rho} \geq \varepsilon + \frac{\beta}{2}, ~~\text{and}~~ \norm{\vec{B}}{2}^2 = \beta.
\end{align*}
Let $f(\rho, \vec{m}, E) = \frac{1}{2}(\abs{\rho-u}^2 + \norm{\vec{m}-\vec{v}}{2}^2 + \abs{E-w}^2)$ and $h(\vec{B}) = \frac{1}{2}\norm{\vec{B}-\vec{z}}{2}^2$. Define sets
\begin{align*}
F^\varepsilon_\beta = \Big\{(\rho, \vec{m}, E)\!:~ \rho \geq \varepsilon,~ E - \frac{\norm{\vec{m}}{2}^2}{2\rho} \geq \varepsilon + \frac{\beta}{2}\Big\}
\quad\text{and}\quad
H_\beta = \{\vec{B}\!:~ \norm{\vec{B}}{2}^2 = \beta\}.
\end{align*}
Then, the set $G^\varepsilon(\norm{\vec{B}}{2}^2 = \beta) = F^\varepsilon_\beta \times H_\beta$.
Applying Lemma~\ref{lem:lem_decouple}, we obtain
\begin{multline}\label{eq:decouple_min}
\min_{(\rho, \vec{m}, E, \vec{B}) \in G^\varepsilon(\norm{\vec{B}}{2}^2 = \beta)}~ \frac{1}{2}(\abs{\rho-u}^2 + \norm{\vec{m}-\vec{v}}{2}^2 + \abs{E-w}^2) + \frac{1}{2}\norm{\vec{B}-\vec{z}}{2}^2 \\
= \min_{(\rho, \vec{m}, E) \in F^\varepsilon_\beta}~ \frac{1}{2}(\abs{\rho - u}^2 + \norm{\vec{m} - \vec{v}}{2}^2 + \abs{E - w}^2) + \min_{\vec{B}\in H_\beta} \frac{1}{2}\norm{\vec{B}-\vec{z}}{2}^2. 
\end{multline}
The right-hand side of \eqref{eq:decouple_min} is well defined. The first `min' is taking minimum of a strongly convex function over a nonempty convex set, which has a unique minimizer. The second `min' is taking minimum of a continuous function over a bounded closed set. The explicit form of its unique minimizer is derived in Lemma~\ref{lem:magnetic_subproblem}.
To see the left-hand side of \eqref{eq:decouple_min} is well defined, we have the following result.
\begin{lemma}\label{lem:exist_min_G_beta}
A minimizer of the following function over set $G^\varepsilon(\norm{\vec{B}}{2}^2 = \beta)$ exists
\begin{align*}
g(\rho, \vec{m}, E, \vec{B}) = \abs{\rho-u}^2 + \norm{\vec{m}-\vec{v}}{2}^2 + \abs{E-w}^2 + \norm{\vec{B}-\vec{z}}{2}^2
\end{align*}
\end{lemma}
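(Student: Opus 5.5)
The plan is a direct compactness argument: show that the set $G^\varepsilon(\norm{\vec{B}}{2}^2 = \beta)$ is nonempty and closed, and that $g$ is coercive. Then a minimizer exists by the Weierstrass theorem applied on a suitable sublevel set. An equivalent route is to use the product structure $G^\varepsilon(\norm{\vec{B}}{2}^2 = \beta) = F^\varepsilon_\beta\times H_\beta$. By this structure, $g = 2f + 2h$ splits into two independent problems. The $F^\varepsilon_\beta$ part is a strongly convex function over a closed convex set, and the $H_\beta$ part is a continuous function over a sphere. I would present the direct argument and note that the splitting gives the same conclusion.

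\emph{Nonemptiness.} Take $\rho = \varepsilon$, $\vec{m} = \vec{0}$, $E = \varepsilon + \beta/2$, and any $\vec{B}$ with $\norm{\vec{B}}{2}^2 = \beta$. For example, use $\vec{B} = \sqrt{\beta}\,\vec{e}_1$, or $\sqrt{\beta}\,\vec{z}/\norm{\vec{z}}{2}$ since $\vec{z}\neq\vec{0}$. This point lies in the set.

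\emph{Closedness.} This is the step needing care, because the map $(\rho,\vec{m})\mapsto \norm{\vec{m}}{2}^2/\rho$ is singular at $\rho=0$. However, the set imposes $\rho\geq\varepsilon>0$. On the closed half-space $\{\rho\geq\varepsilon\}$ the function
\begin{align*}
E - \frac{\norm{\vec{m}}{2}^2}{2\rho} - \frac{\norm{\vec{B}}{2}^2}{2}
\end{align*}
is continuous, so its superlevel set $\{\cdot\geq\varepsilon\}$ is relatively closed there. The constraint $\norm{\vec{B}}{2}^2=\beta$ defines the preimage of a point under a continuous map, so it is also closed. Concretely, take a convergent sequence in the set. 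Its $\rho$-limit is at least $\varepsilon$, and the remaining inequality and equality pass to the limit by continuity. So the intersection of these closed sets is closed.

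\emph{Coercivity and conclusion.} Let $\vec{x}_0$ be the feasible point above, and consider the sublevel set
\begin{align*}
S = \{\vec{x}\in G^\varepsilon(\norm{\vec{B}}{2}^2 = \beta)\!:~ g(\vec{x})\leq g(\vec{x}_0)\}.
\end{align*}
Since $g$ is the squared Euclidean distance to $(u,\vec{v},w,\vec{z})$, the set $S$ lies in a closed ball around that point, so it is bounded. It is closed as the intersection of a closed set with a sublevel set of a continuous function. Hence $S$ is compact and nonempty. The continuous function $g$ attains its minimum on $S$, and this minimum is a minimizer over the whole slice, since every point outside $S$ has a larger value of $g$.

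Uniqueness is not needed here. The set is not convex because of the sphere constraint, so only existence is claimed, which the argument above provides.
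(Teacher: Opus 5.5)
Your proof is correct and follows the paper's: you intersect the slice with a sublevel set of $g$ anchored at the same feasible point $(\varepsilon,\vec{0},\varepsilon+\beta/2,\sqrt{\beta}\vec{e}_1)$, apply Weierstrass on this nonempty closed bounded set, and note that points outside it have larger $g$; the only difference is that the paper uses the level $g(\vec{\eta}_0)+1$ where you use $g(\vec{x}_0)$, which does not matter. Your justification of closedness via $\rho\geq\varepsilon$ supplies a step the paper only asserts.
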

\begin{proof}
The function $g$ is continuous and the following set is closed and bounded. ($\vec{e}_1$ denotes the first standard basis in $\IR^n$)
\begin{multline*}
A = \{(\rho, \vec{m}, E, \vec{B})\!:~ \rho \geq \varepsilon,~ E - \frac{\norm{\vec{m}}{2}^2}{2\rho} \geq \varepsilon + \frac{\beta}{2},~ \norm{\vec{B}}{2}^2 = \beta~ \text{and} \\
g(\rho, \vec{m}, E, \vec{B}) \leq \abs{\varepsilon-u}^2 + \norm{\vec{v}}{2}^2 + \abs{\varepsilon+\frac{\beta}{2}-w}^2 + \norm{\sqrt{\beta}\vec{e}_1-\vec{z}}{2}^2 + 1\}.
\end{multline*}
The set $A \subset G^\varepsilon(\norm{\vec{B}}{2}^2 = \beta)$ is nonempty, since $\vec{\eta}_0 = (\varepsilon, \vec{0}, \varepsilon+\beta/2, \sqrt{\beta}\vec{e}_1) \in A$.
A continuous function on a nonempty bounded closed set has minimum, then a minimizer of $g$ on set $A$ exists, denoted by $\vec{\xi}^\ast\in A$. We have $g(\vec{\xi}^\ast) \leq g(\vec{\eta}_0)$. 
For any point $\vec{\eta}\in G^\varepsilon(\norm{\vec{B}}{2}^2 = \beta)\setminus A$, we have 
\begin{align*}
g(\vec{\eta}) &> \abs{\varepsilon-u}^2 + \norm{\vec{v}}{2}^2 + \abs{\varepsilon+\frac{\beta}{2}-w}^2 + \norm{\sqrt{\beta}\vec{e}_1-\vec{z}}{2}^2 + 1  
> g(\vec{\eta}_0) \geq g(\vec{\xi}^\ast).
\end{align*} 
Thus, there exists a minimizer of $g$ on set $G^\varepsilon(\norm{\vec{B}}{2}^2 = \beta)$.
\end{proof}
Thus, we conclude that \eqref{eq:decouple_min} is well defined.
Next, let us show the outer minimization problem in \eqref{eq:problem_2} is attainable. Moreover, this gives us an interval to search for $\beta$.

\subsection{Properties of decoupled subproblems}\label{sec:projection:properties}
Given a point $(u, \vec{v}, w, \vec{z})$ with $\vec{z}\neq\vec{0}$, let $\rho(\beta)$, $\vec{m}(\beta)$, and $E(\beta)$ denote the solution of the minimization problem 
\begin{align}\label{eq:decoupled_min_fluid}
\min_{(\rho, \vec{m}, E) \in F^\varepsilon_\beta} \frac{1}{2}(\abs{\rho - u}^2 + \norm{\vec{m} - \vec{v}}{2}^2 + \abs{E - w}^2).
\end{align}
Let $\vec{B}(\beta)$ denote the solution of the minimization problem ${\displaystyle \min_{\vec{B}\in H_\beta}} \frac{1}{2}\norm{\vec{B}-\vec{z}}{2}^2$. We have
\begin{lemma}\label{lem:magnetic_subproblem}
For $\vec{z} \neq \vec{0}$, the solution of $\min \norm{\vec{B} - \vec{z}}{2}^2$ with constraint $\norm{\vec{B}}{2}^2 = \beta$ is
\begin{align*}
\vec{B}(\beta) = \sqrt{\beta}\frac{\vec{z}}{\norm{\vec{z}}{2}}.
\end{align*}
\end{lemma}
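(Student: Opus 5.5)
The plan is to expand the squared distance on the sphere $H_\beta$ and reduce the problem to maximizing a linear functional, which Cauchy--Schwarz settles directly. For any $\vec{B}$ with $\norm{\vec{B}}{2}^2=\beta$,
\begin{align*}
\norm{\vec{B}-\vec{z}}{2}^2 = \beta - 2\,\vec{B}\cdot\vec{z} + \norm{\vec{z}}{2}^2 .
\end{align*}
Since $\beta$ and $\norm{\vec{z}}{2}^2$ are fixed, minimizing $\norm{\vec{B}-\vec{z}}{2}^2$ over $H_\beta$ is equivalent to maximizing $\vec{B}\cdot\vec{z}$ over $H_\beta$.

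Next I will bound this linear term. By Cauchy--Schwarz, $\vec{B}\cdot\vec{z}\leq \norm{\vec{B}}{2}\norm{\vec{z}}{2} = \sqrt{\beta}\,\norm{\vec{z}}{2}$. The candidate $\vec{B}(\beta)=\sqrt{\beta}\,\vec{z}/\norm{\vec{z}}{2}$ is well defined because $\vec{z}\neq\vec{0}$. It lies in $H_\beta$, since its squared norm is $\beta$, and it attains this bound. Hence $\vec{B}(\beta)$ is a minimizer, with minimum value $(\sqrt{\beta}-\norm{\vec{z}}{2})^2$.

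For uniqueness, consider the case $\beta>0$. Equality in Cauchy--Schwarz, together with the sign condition $\vec{B}\cdot\vec{z}>0$, forces $\vec{B}=c\,\vec{z}$ with $c>0$. The constraint $\norm{\vec{B}}{2}^2=\beta$ then fixes $c=\sqrt{\beta}/\norm{\vec{z}}{2}$. When $\beta=0$, the set $H_0=\{\vec{0}\}$ is a single point, and the formula correctly gives $\vec{B}(0)=\vec{0}$.

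The only point requiring care is this uniqueness argument. The set $H_\beta$ is a sphere and therefore not convex, so uniqueness cannot be taken from strong convexity and has to come from the equality case of Cauchy--Schwarz. The hypothesis $\vec{z}\neq\vec{0}$ is exactly what makes the maximizing direction unique; when $\vec{z}=\vec{0}$, every point of the sphere is a minimizer.
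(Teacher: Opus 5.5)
Your proof is correct, but it takes a different route from the paper. The paper applies Lagrange multipliers to $\mathcal{L} = \norm{\vec{B}-\vec{z}}{2}^2 + \lambda(\norm{\vec{B}}{2}^2-\beta)$. Stationarity gives $\vec{B} = \vec{z}/(1+\lambda)$, and the constraint then gives $(1+\lambda)^2\beta = \norm{\vec{z}}{2}^2$. This produces the two candidates $\pm\sqrt{\beta}\,\vec{z}/\norm{\vec{z}}{2}$, and comparing their distances to $\vec{z}$ selects the plus sign. You instead expand the square on the sphere, reduce to maximizing the linear functional $\vec{B}\cdot\vec{z}$, and settle it with Cauchy--Schwarz. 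Your route certifies global optimality directly rather than through first-order necessary conditions. It therefore needs neither the existence of a minimizer (compactness of $H_\beta$) nor the constraint qualification that the constraint gradient $2\vec{B}$ be nonzero. That qualification fails exactly at $\beta = 0$, where the Lagrange system has no solution at all, since it would force $\vec{z} = \vec{0}$. The paper's derivation does not address this case, while you treat it separately. You also get uniqueness cleanly from the equality case of Cauchy--Schwarz. The paper's final comparison is written with $\leq$ and needs the strict inequality, which holds for $\beta > 0$, to conclude uniqueness. Finally, you obtain the optimal value $(\sqrt{\beta}-\norm{\vec{z}}{2})^2$ immediately, which is exactly the function $h(\beta)$ the paper uses next. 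The paper's Lagrange approach is the more mechanical candidate-generation method, in the same spirit as its KKT treatment of the fluid subproblem. For this simple sphere constraint, though, your argument is shorter and closes the small gaps the multiplier argument leaves.
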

\begin{proof}
We utilize Lagrange multiplier to solve this minimization problem. Define
\begin{align*}
\mathcal{L} &= \norm{\vec{B} - \vec{z}}{2}^2 + \lambda(\norm{\vec{B}}{2}^2 - \beta)
= (1+\lambda)\transpose{\vec{B}}\vec{B} - 2\transpose{\vec{B}}\vec{z} + \transpose{\vec{z}}\vec{z} - \lambda\beta.
\end{align*}
Taking derivatives with respect to $\vec{B}$ and $\lambda$, we obtain
\begin{subequations}
\begin{align}
\frac{\partial \mathcal{L}}{\partial \vec{B}} &= 2(1+\lambda)\vec{B} - 2\vec{z} = \vec{0} \quad\Rightarrow\quad \vec{B} = \frac{\vec{z}}{1+\lambda},\label{eq:lagrange_multiplier_1}\\
\frac{\partial \mathcal{L}}{\partial \lambda} &= \transpose{\vec{B}}\vec{B} - \beta = 0 \hspace{1.3cm}\Rightarrow\quad \norm{\vec{B}}{2}^2 = \beta.\label{eq:lagrange_multiplier_2}
\end{align}
\end{subequations}
Substituting \eqref{eq:lagrange_multiplier_1} into \eqref{eq:lagrange_multiplier_2} yields $\norm{\vec{z}}{2}^2 = (1+\lambda)^2\beta$, which implies $\vec{B}_1 = \sqrt{\beta}\vec{z}/\norm{\vec{z}}{2}$ and $\vec{B}_2 = -\sqrt{\beta}\vec{z}/\norm{\vec{z}}{2}$.
Examining their distances to $\vec{z}$, we have 
\begin{align*}
\norm{\vec{B}_1-\vec{z}}{2} 
= \left|\frac{1}{\norm{\vec{z}}{2}}\sqrt{\beta} - 1\right|\,\norm{\vec{z}}{2}
\leq \left|\frac{1}{\norm{\vec{z}}{2}}\sqrt{\beta} + 1\right|\,\norm{\vec{z}}{2} 
= \norm{\vec{B}_2-\vec{z}}{2}.
\end{align*}
Thus, $\vec{B}_1$ is the minimizer. We conclude the proof.
\end{proof}
\par
To verify the condition in Lemma~\ref{lem:double_min} holds, it is sufficient to prove that $d^2(\beta)$, as defined below, has a unique minimizer.
\begin{align}\label{eq:MHD_slicing_d2}
d^2(\beta) = \abs{\rho(\beta) - u}^2 + \norm{\vec{m}(\beta) - \vec{v}}{2}^2 + \abs{E(\beta) - w}^2 + \norm{\vec{B}(\beta) - \vec{z}}{2}^2.
\end{align}
Let us show $d^2(\beta)$ is continuous and strictly convex on $[0,+\infty)$. By Lemma~\ref{lem:magnetic_subproblem}, we rewrite $d^2(\beta) = f(\beta) + h(\beta)$, where
\begin{align}\label{eq:def_f_and_h}
f(\beta) &= \abs{\rho(\beta) - u}^2 + \norm{\vec{m}(\beta) - \vec{v}}{2}^2 + \abs{E(\beta) - w}^2 
\quad\text{and}\quad 
h(\beta) = (\sqrt{\beta} - \norm{\vec{z}}{2})^2.
\end{align}
$h(\beta)$ is continuous and strictly convex on $[0,+\infty)$. For any $\beta\in(0,+\infty)$, we have
\begin{align*}
h'(\beta) 
= 1 - \frac{\norm{\vec{z}}{2}}{\sqrt{\beta}}
\quad\text{and}\quad
h''(\beta) &= \frac{1}{2\beta\sqrt{\beta}}\,\norm{\vec{z}}{2} > 0.
\end{align*}
Thus, to prove $d^2(\beta)$ is strictly convex on $[0,+\infty)$, we only need to show $f(\beta)$ is convex.
\begin{lemma}\label{lem:points_in_Geps}
For any $\lambda\in [0,1]$, let $\beta_c = \lambda\beta_1 + (1-\lambda)\beta_2$ be a convex combination of $\beta_1$ and $\beta_2$, and let $\vec{V}(\beta) = (\rho(\beta), \vec{m}(\beta), E(\beta))$. Then, both 
\begin{align*}
(\lambda\vec{V}(\beta_1) + (1-\lambda)\vec{V}(\beta_2), \vec{B}(\beta_c))
\quad\text{and}\quad 
(\vec{V}(\beta_c), \vec{B}(\beta_c))
\end{align*}
belong to the set $G^\varepsilon(\norm{\vec{B}}{2}^2 = \beta_c)$.
\end{lemma}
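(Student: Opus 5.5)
Since $G^\varepsilon(\norm{\vec{B}}{2}^2 = \beta_c) = F^\varepsilon_{\beta_c}\times H_{\beta_c}$, the plan is to check the fluid and magnetic components separately. The magnetic part is immediate. By Lemma~\ref{lem:magnetic_subproblem}, $\vec{B}(\beta_c) = \sqrt{\beta_c}\,\vec{z}/\norm{\vec{z}}{2}$, so $\norm{\vec{B}(\beta_c)}{2}^2 = \beta_c$ and hence $\vec{B}(\beta_c)\in H_{\beta_c}$. For the second point, $\vec{V}(\beta_c)$ is by definition the minimizer of \eqref{eq:decoupled_min_fluid} over $F^\varepsilon_{\beta_c}$. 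It therefore lies in $F^\varepsilon_{\beta_c}$, and this case is done.

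The real content is the first point: we must show $\lambda\vec{V}(\beta_1)+(1-\lambda)\vec{V}(\beta_2)\in F^\varepsilon_{\beta_c}$. I would introduce the function
\begin{align*}
\psi(\rho,\vec{m},E) = E - \frac{\norm{\vec{m}}{2}^2}{2\rho},
\end{align*}
which is concave on $\{\rho>0\}$. The reason is that $\norm{\vec{m}}{2}^2/\rho$ is the perspective of the convex function $\norm{\vec{m}}{2}^2$, hence jointly convex in $(\rho,\vec{m})$, while $E$ enters linearly.

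With $\vec{V}(\beta_j)\in F^\varepsilon_{\beta_j}$ we have $\rho(\beta_j)\ge\varepsilon$ and $\psi(\vec{V}(\beta_j))\ge \varepsilon+\beta_j/2$ for $j=1,2$. The density constraint is linear, so the convex combination of densities is still $\ge\varepsilon$. For the energy constraint, concavity of $\psi$ gives
\begin{align*}
\psi\big(\lambda\vec{V}(\beta_1)+(1-\lambda)\vec{V}(\beta_2)\big) \ge \lambda\psi(\vec{V}(\beta_1)) + (1-\lambda)\psi(\vec{V}(\beta_2)) \ge \varepsilon + \frac{\lambda\beta_1+(1-\lambda)\beta_2}{2} = \varepsilon+\frac{\beta_c}{2}.
\end{align*}
The point is that the right-hand side of the constraint is affine in $\beta$, so it passes through the convex combination exactly. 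This puts the combined fluid state in $F^\varepsilon_{\beta_c}$. Pairing it with $\vec{B}(\beta_c)\in H_{\beta_c}$ gives membership in the product set.

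The only step needing care is the concavity of $\psi$, i.e., joint convexity of $(\rho,\vec{m})\mapsto\norm{\vec{m}}{2}^2/\rho$ for $\rho>0$. I would verify it directly with the inequality
\begin{align*}
\frac{\norm{\lambda\vec{m}_1+(1-\lambda)\vec{m}_2}{2}^2}{\lambda\rho_1+(1-\lambda)\rho_2} \le \lambda\frac{\norm{\vec{m}_1}{2}^2}{\rho_1}+(1-\lambda)\frac{\norm{\vec{m}_2}{2}^2}{\rho_2},
\end{align*}
which follows from Cauchy--Schwarz by writing $\lambda\vec{m}_1 = \sqrt{\lambda\rho_1}\cdot\big(\sqrt{\lambda}\,\vec{m}_1/\sqrt{\rho_1}\big)$ and the analogous factorization for the second term. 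Alternatively, one can cite the convexity of $G$ from \cite{wu2025high} restricted to $\vec{B}=\vec{0}$.
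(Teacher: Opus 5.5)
Your proof is correct and follows essentially the same route as the paper. Membership of $(\vec{V}(\beta_c),\vec{B}(\beta_c))$ is immediate from the definitions. For the convex combination, the density bound $\rho\ge\varepsilon$ survives averaging, and concavity of $E-\norm{\vec{m}}{2}^2/(2\rho)$ combines with the affine dependence of $\varepsilon+\beta/2$ on $\beta$. The only difference is how concavity is justified: the paper cites nonpositive Hessian eigenvalues \cite{zhang2017positivity}, while your perspective/Cauchy--Schwarz argument is self-contained, and you correctly write the last step as ${}\ge\varepsilon+\beta_c/2$ where the paper's display has an equality.
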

\begin{proof}
From Lemma~\ref{lem:exist_min_G_beta}, we know that both $(\vec{V}(\beta_1), \vec{B}(\beta_1)) \in G^\varepsilon(\norm{\vec{B}}{2}^2 = \beta_1)$ and $(\vec{V}(\beta_2), \vec{B}(\beta_2)) \in G^\varepsilon(\norm{\vec{B}}{2}^2 = \beta_2)$ exist and satisfy
\begin{align*}
&\rho(\beta_1) \geq \varepsilon \quad\text{and}\quad
E(\beta_1) - \frac{\norm{\vec{m}(\beta_1)}{2}^2}{2\rho(\beta_1)} \geq \varepsilon + \frac{\beta_1}{2},\\
&\rho(\beta_2) \geq \varepsilon \quad\text{and}\quad
E(\beta_2) - \frac{\norm{\vec{m}(\beta_2)}{2}^2}{2\rho(\beta_2)} \geq \varepsilon + \frac{\beta_2}{2}.
\end{align*}
For any $\lambda\in[0,1]$, we have $\lambda\rho(\beta_1) + (1-\lambda)\rho(\beta_2) \geq \varepsilon > 0$.
When $\rho > 0$, it is easy to verify that the Hessian of the function $q(\vec{V}) = E - \frac{\norm{\vec{m}}{2}^2}{2\rho}$ has nonpositive eigenvalues and hence $q$ is concave \cite{zhang2017positivity}. 
By Jensen's inequality, we get
\begin{align*}
q(\lambda\vec{V}(\beta_1) + (1-\lambda)\vec{V}(\beta_2)) 
\geq \lambda q(\vec{V}(\beta_1)) + (1-\lambda) q(\vec{V}(\beta_2))
= \varepsilon + \frac{\beta_c}{2}.
\end{align*}
Thus, $(\lambda\vec{V}(\beta_1) + (1-\lambda)\vec{V}(\beta_2), \vec{B}(\beta_c))$ belongs to the set $G^\varepsilon(\norm{\vec{B}}{2}^2 = \beta_c)$.
In addition, recall that $\vec{V}(\beta_c)$ solves \eqref{eq:decoupled_min_fluid} with $\beta = \beta_c$ and $\vec{B}(\beta_c) = \sqrt{\beta_c}\vec{z}/\norm{\vec{z}}{2}$. We have
\begin{align*}
\rho(\beta_c) \geq \varepsilon, \quad
E(\beta_c) - \frac{\norm{\vec{m}(\beta_c)}{2}^2}{2\rho(\beta_c)} \geq \varepsilon + \frac{\beta_c}{2}, \quad\text{and}\quad
\norm{\vec{B}(\beta_c)}{2}^2 = \beta_c.
\end{align*}
Thus, $(\vec{V}(\beta_c), \vec{B}(\beta_c))$ also belongs to the set $G^\varepsilon(\norm{\vec{B}}{2}^2 = \beta_c)$.
\end{proof}
For any given point $(u, \vec{v}, w, \vec{z})$ with $\vec{z}\neq\vec{0}$, noticing that $\vec{V}(\beta_c)$ is the minimizer of \eqref{eq:decoupled_min_fluid} when $\beta = \beta_c$, using Lemma~\ref{lem:points_in_Geps}, we have
\begin{align*}
f(\beta_c) 
&= \norm{\vec{V}(\beta_c) - (u,\vec{v},w)}{2}^2
\leq \norm{\lambda\vec{V}(\beta_1) + (1-\lambda)\vec{V}(\beta_2) - (u,\vec{v},w)}{2}^2\\
&= \norm{\lambda\vec{V}(\beta_1) - \lambda(u,\vec{v},w) + (1-\lambda)\vec{V}(\beta_2) - (1-\lambda)(u,\vec{v},w)}{2}^2.
\end{align*}
Since a quadratic function is convex, by Jensen's inequality, for any $\lambda\in[0,1]$, we get
\begin{align*}
f(\lambda\beta_1 + (1-\lambda)\beta_2) 
\leq \lambda f(\beta_1) + (1-\lambda)f(\beta_2). 
\end{align*}
Therefore, $f(\beta)$ is convex. Recall $h(\beta)$ is strictly convex. We get $d^2(\beta) = f(\beta) + h(\beta)$ is strictly convex on $[0,+\infty)$.
Next, we show the continuity of $d^2(\beta)$.
\begin{lemma}\label{lem:continuity_d2}
The mapping $\beta \mapsto d^2(\beta)$ is continuous on the interval $[0,+\infty)$. 
\end{lemma}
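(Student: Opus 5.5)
Since $d^2(\beta)=f(\beta)+h(\beta)$ with $h(\beta)=(\sqrt{\beta}-\norm{\vec{z}}{2})^2$ obviously continuous on $[0,+\infty)$, the whole task is to show continuity of $f(\beta)=\norm{\vec{V}(\beta)-(u,\vec{v},w)}{2}^2$. The plan is to exploit the fact, already established just before the lemma, that $f$ is convex on $[0,+\infty)$ and finite everywhere there. A finite convex function on an interval is automatically continuous on the interior $(0,+\infty)$, so only right-continuity at $\beta=0$ requires an argument. Convexity already gives upper semicontinuity fails only in one direction at the endpoint: by convexity, $\limsup_{\beta\to0^+}f(\beta)\le f(0)$ (since $f(\beta)\le (1-\beta/\beta_1)f(0)+(\beta/\beta_1)f(\beta_1)$ for $0<\beta<\beta_1$). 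So it remains to prove $\liminf_{\beta\to0^+}f(\beta)\ge f(0)$.

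For the lower bound I would use the monotone structure of the slices: $F^\varepsilon_{\beta}\subset F^\varepsilon_{0}$ for every $\beta\ge0$, because $E-\norm{\vec{m}}{2}^2/(2\rho)\ge\varepsilon+\beta/2\ge\varepsilon$. Hence $\vec{V}(\beta)\in F^\varepsilon_0$ and, since $\vec{V}(0)$ is the minimizer of the distance over $F^\varepsilon_0$, we get $f(0)\le f(\beta)$ for all $\beta\ge 0$. Combined with the previous paragraph, $\lim_{\beta\to0^+}f(\beta)=f(0)$. In fact this also shows $f$ is nondecreasing in $\beta$, which is a useful byproduct.

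Alternatively (and as a safeguard in case one prefers a direct argument not relying on the convex-function continuity theorem), I would show $\beta\mapsto\vec{V}(\beta)$ is continuous: given $\beta_k\to\beta$, the points $\vec{V}(\beta_k)$ are bounded (the same comparison point $(\varepsilon,\vec{0},\varepsilon+\beta_k/2)$ as in Lemma~\ref{lem:exist_min_G_beta} bounds $f(\beta_k)$ uniformly), so a subsequence converges to some $\vec{V}^\ast$; closedness of the constraint in $(\rho,\vec{m},E,\beta)$ with $\rho\ge\varepsilon$ gives $\vec{V}^\ast\in F^\varepsilon_\beta$; and shifting $\vec{V}(\beta)$ by adding $(\beta_k-\beta)/2$ to $E$ yields a feasible point for $\beta_k$ with distance converging to $\sqrt{f(\beta)}$, giving $\limsup f(\beta_k)\le f(\beta)$. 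Then $\norm{\vec{V}^\ast-(u,\vec{v},w)}{2}^2\le f(\beta)$ and uniqueness of the projection forces $\vec{V}^\ast=\vec{V}(\beta)$, so the full sequence converges and $f$ is continuous.

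The main obstacle is only the endpoint $\beta=0$ (or, in the direct route, the recovery-sequence step); the energy-shift construction $E\mapsto E+(\beta_k-\beta)/2$ handles it cleanly when $\beta_k>\beta$, while for $\beta_k<\beta$ the inclusion $F^\varepsilon_\beta\subset F^\varepsilon_{\beta_k}$ makes $\vec{V}(\beta)$ itself feasible.
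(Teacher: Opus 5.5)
Your proof is correct, and its skeleton matches the paper's. Continuity on $(0,+\infty)$ comes from convexity: the paper uses strict convexity of $d^2$, you use convexity of $f$ plus continuity of $h$, which is equivalent. At $\beta=0$, your lower bound $\liminf_{\beta\to0^+}f(\beta)\ge f(0)$ comes from the inclusion $F^\varepsilon_\beta\subset F^\varepsilon_0$, exactly as in step \emph{i}) of the paper.

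The routes differ only in the upper bound at $0$. The paper builds the explicit competitor $(\rho(0),\vec{m}(0),E(0)+\beta/2)\in F^\varepsilon_\beta$, which gives the quantitative estimate $f(\beta)\le f(0)+\beta(E(0)-w)+\beta^2/4$. You get $\limsup_{\beta\to0^+}f(\beta)\le f(0)$ directly from the chord inequality for the already-established convex $f$. That is more economical but non-quantitative, and it leans on the convexity argument (Lemma~\ref{lem:points_in_Geps} and the concavity of $q$).

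Your backup route does not use convexity at all. Compactness of $\{\vec{V}(\beta_k)\}$, closedness of the constraint for $\rho\ge\varepsilon$, the energy-shift recovery point, and uniqueness of the projection together give continuity of the minimizer map $\beta\mapsto\vec{V}(\beta)$ itself. This is stronger than the lemma: it also shows the slicing output $(\vec{V}(\beta),\vec{B}(\beta))$ depends continuously on $\beta$. Its recovery step is the paper's step \emph{ii}) carried out at a general base point. Applied that way, as the paper later does in Lemma~\ref{lem:search_interval_I}, the shift gives $\abs{\sqrt{f(\beta_1)}-\sqrt{f(\beta_2)}}\le\frac{1}{2}\abs{\beta_1-\beta_2}$, which would settle continuity of $f$ everywhere in one line.

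One small wording issue: your sentence about upper semicontinuity failing only in one direction is garbled. What you mean is that convexity already gives upper semicontinuity at the endpoint, so only lower semicontinuity needs a separate argument.
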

\begin{proof}
The strict convexity of $d^2(\beta)$ on interval $[0,+\infty)$ ensures its continuity on $(0, +\infty)$, we only need to show it is continuous at $0$. 
The function $f(\beta)$ is well-defined for any $\beta\geq 0$, as \eqref{eq:decoupled_min_fluid} is minimizing a strongly convex function on a nonempty convex set, which guarantees a unique solution $(\rho(\beta), \vec{m}(\beta), E(\beta))$.
\par
\emph{i}) For any $\beta_2 \geq \beta_1$, we have $F^\varepsilon_{\beta_2} \subset F^\varepsilon_{\beta_1}$, which implies $f$ is monotonically increasing on $[0,+\infty)$. Specifically, for any $\beta\geq 0$, we have $F^\varepsilon_{\beta} \subset F^\varepsilon_0$, namely $f(\beta)\geq f(0)$ holds. Thus, $f$ is lower semi-continuous at $\beta=0$, for any sequence $\beta_k\rightarrow 0^+$, we have 
\begin{align*}
\liminf_{\beta_k\rightarrow 0^+} f(\beta_k) \geq f(0).
\end{align*}
\par
\emph{ii}) From $(\rho(0), \vec{m}(0), E(0))$ solving \eqref{eq:decoupled_min_fluid} with $\beta=0$, we get $(\rho(0), \vec{m}(0), E(0) + \beta/2)$ belonging to $F^\varepsilon_\beta$. This is easy to verify, since
\begin{align*}
\rho(0) \geq \varepsilon \quad\text{and}\quad
\Big(E(0) + \frac{\beta}{2}\Big) - \frac{\norm{\vec{m}(0)}{2}^2}{2\rho(0)} = \Big(E(0) - \frac{\norm{\vec{m}(0)}{2}^2}{2\rho(0)}\Big) + \frac{\beta}{2} \geq \varepsilon + \frac{\beta}{2}.
\end{align*}
Thus, we obtain
\begin{align*}
f(\beta) \leq \abs{\rho(0) - u}^2 + \norm{\vec{m}(0) - \vec{v}}{2}^2 + \abs{E(0) + \frac{\beta}{2} - w}^2
= f(0) + \beta(E(0)-w) + \frac{\beta^2}{4}.
\end{align*}
Thus, $f$ is upper semi-continuous at $\beta = 0$, for any sequence $\beta_k\rightarrow 0^+$, we have 
\begin{align*}
\limsup_{\beta_k\rightarrow 0^+} f(\beta) 
\leq \limsup_{\beta_k\rightarrow 0^+} \Big(f(0) + \beta_k(E(0)-w) + \frac{\beta_k^2}{4}\Big)
\leq f(0).
\end{align*}
Combining \emph{i}) and \emph{ii}), we conclude the proof.
\end{proof}
We next show that $d^2(\beta)$ has a unique minimizer on $[0,+\infty)$. 
By Lemma~\ref{lem:continuity_d2}, we know $d^2(\beta)$ is continuous on $[0,+\infty)$. In addition, we have $d^2(\beta)\rightarrow +\infty$ as $\beta\rightarrow +\infty$, since
\begin{align*}
d^2(\beta) = \abs{\rho(\beta) - u}^2 + \norm{\vec{m}(\beta) - \vec{v}}{2}^2 + \abs{E(\beta) - w}^2 + (\sqrt{\beta} - \norm{\vec{z}}{2})^2 \geq (\sqrt{\beta} - \norm{\vec{z}}{2})^2.
\end{align*}
There exists a sufficiently large $M > 0$, when $\beta > M$, we have $d^2(\beta) > d^2(0)$.
On the closed interval $[0, M]$, since $d^2(\beta)$ is strictly convex, there exists a unique minimizer $\beta^\ast$, namely for all $\beta\in[0, M]$, $d^2(\beta) \geq d^2(\beta^\ast)$.
In particular, $d^2(0) \geq d^2(\beta^\ast)$ holds. For any $\beta > M$, $d^2(\beta) > d^2(0)$ gives $ d^2(\beta) > d^2(\beta^\ast)$. Thus, $\beta^\ast$ is the unique minimizer.
\par
For convenience of finding the minimizer numerically, the following lemma confines our search to a finite interval.
\begin{lemma}\label{lem:search_interval_I}
The unique minimizer of $d^2(\beta)$ lies in a closed interval with upper bound $\beta_\mathrm{high} = \norm{\vec{z}}{2}^2$ and lower bound 
\begin{align}\label{eq:lower_bound_d2}
\beta_\mathrm{low} = \left(\frac{\norm{\vec{z}}{2}}{1 + \sqrt{f(0)} + \frac{\norm{\vec{z}}{2}^2}{2}}\right)^2.
\end{align}
\end{lemma}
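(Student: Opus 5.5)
The plan is to get the two endpoints separately. The upper bound comes from the monotonicity of $f$. The lower bound comes from the KKT conditions \eqref{eq:KKT_stationarity} for the original projection \eqref{eq:problem_org}, together with a feasible comparison point that bounds the multiplier $\mu$.

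\emph{Upper bound.} By part \emph{i}) of the proof of Lemma~\ref{lem:continuity_d2}, $F^\varepsilon_{\beta_2}\subset F^\varepsilon_{\beta_1}$ whenever $\beta_2\ge\beta_1$, so $f$ is nondecreasing on $[0,+\infty)$. The function $h(\beta)=(\sqrt{\beta}-\norm{\vec{z}}{2})^2$ has $h'(\beta)=1-\norm{\vec{z}}{2}/\sqrt{\beta}>0$ for $\beta>\norm{\vec{z}}{2}^2$, so $h$ is strictly increasing there. Hence for every $\beta>\norm{\vec{z}}{2}^2$ we have $d^2(\beta)=f(\beta)+h(\beta)>f(\norm{\vec{z}}{2}^2)+h(\norm{\vec{z}}{2}^2)=d^2(\norm{\vec{z}}{2}^2)$. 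Therefore the unique minimizer satisfies $\beta^\ast\le\beta_\mathrm{high}$.

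\emph{Lower bound.}
\begin{enumerate}
\item By Lemmas~\ref{lem:double_min} and \ref{lem:lem_decouple}, the unique minimizer $\beta^\ast$ of $d^2$ is exactly $\norm{\vec{B}^\ast}{2}^2$, where $(\rho^\ast,\vec{m}^\ast,E^\ast,\vec{B}^\ast)$ is the projection solving \eqref{eq:problem_org}.
\item Problem \eqref{eq:problem_org} is convex, and Slater's condition holds: for example, the point $(2\varepsilon,\vec{0},2\varepsilon+1,\vec{0})$ is strictly feasible. So KKT multipliers $\lambda,\mu\ge0$ exist at the minimizer.
\item From \eqref{eq:KKT_stationarity_2} we get $\vec{B}^\ast=\vec{z}/(1+\mu)$, hence $\sqrt{\beta^\ast}=\norm{\vec{z}}{2}/(1+\mu)$, and also $\mu=E^\ast-w$.
\item It therefore suffices to show $\mu\le\sqrt{f(0)}+\norm{\vec{z}}{2}^2/2$.
\end{enumerate}

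\emph{Bounding $\mu$.} I would use $\mu=E^\ast-w\le|E^\ast-w|\le d(\beta^\ast)$, where $d(\beta^\ast)$ is the distance from $(u,\vec{v},w,\vec{z})$ to its projection. Then I compare with a feasible point, following part \emph{ii}) of Lemma~\ref{lem:continuity_d2}. The point
\begin{align*}
\vec{\eta}=\Big(\rho(0),\vec{m}(0),E(0)+\frac{\norm{\vec{z}}{2}^2}{2},\vec{z}\Big)
\end{align*}
lies in $G^\varepsilon$, because adding $\norm{\vec{z}}{2}^2/2$ to the energy exactly compensates the magnetic energy. The projection minimizes distance over $G^\varepsilon$, so the triangle inequality gives
\begin{align*}
d(\beta^\ast)\le\norm{\vec{\eta}-(u,\vec{v},w,\vec{z})}{2}\le\sqrt{f(0)}+\frac{\norm{\vec{z}}{2}^2}{2}.
\end{align*}
Substituting this bound into $\sqrt{\beta^\ast}=\norm{\vec{z}}{2}/(1+\mu)$ yields $\beta^\ast\ge\beta_\mathrm{low}$. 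The case $\mu=0$ gives $\beta^\ast=\norm{\vec{z}}{2}^2$, which is consistent with both bounds.

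The main obstacle is the lower bound, specifically justifying the link between $\beta^\ast$ and the KKT multiplier $\mu$. This needs both the identification of the $d^2$-minimizer with the projection and a constraint qualification. The explicit Slater point above handles the latter. If one prefers to avoid KKT, the same bound could instead be obtained from the one-sided derivative of $d^2$ at $\beta^\ast$, but the KKT route is shorter.
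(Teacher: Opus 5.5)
Your proof is correct. The upper bound is argued as in the paper, from monotonicity of $f$ and $h$ increasing past $\norm{\vec{z}}{2}^2$. The lower bound, however, takes a genuinely different route.

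\emph{The paper's route.} It stays inside the reduced one-dimensional problem. Since $F^\varepsilon_{\beta_2}$ is the translate of $F^\varepsilon_{\beta_1}$ by $\frac12(\beta_2-\beta_1)$ in the $E$-direction, $\sqrt{f}$ is $\frac12$-Lipschitz. Hence $f$ is Lipschitz on $[0,\norm{\vec{z}}{2}^2]$ with constant $\sqrt{f(0)}+\frac12\norm{\vec{z}}{2}^2$, which bounds $\partial f$. The optimality condition $0\in\partial f(\beta^\ast)+1-\norm{\vec{z}}{2}/\sqrt{\beta^\ast}$ then gives $\beta_\mathrm{low}$.

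\emph{Your route.} You return to the full projection and use the KKT relations $\sqrt{\beta^\ast}=\norm{\vec{z}}{2}/(1+\mu)$ and $\mu=E^\ast-w$. You then bound $\mu$ by the projection distance and compare with an explicit feasible point.

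The two arguments control the same quantity. By standard sensitivity analysis, $\mu=E(\beta^\ast)-w$ is a subgradient of $f$ at $\beta^\ast$, so your stationarity relation is the paper's one-dimensional optimality condition in disguise. Both obtain the constant $\sqrt{f(0)}+\frac12\norm{\vec{z}}{2}^2$ from the same device: raising $E(0)$ to absorb magnetic energy.

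\emph{What yours buys and costs.} Your argument is shorter, avoids subdifferential calculus for $f$, and gives the upper bound for free from $\mu\ge0$. Its costs are two. First, you need a constraint qualification for the nonlinear constraint. Your Slater point works once the problem is posed on the open convex domain $\{\rho>0\}$, where $\norm{\vec{m}}{2}^2/(2\rho)$ is convex and differentiable. Second, you need the identification $\beta^\ast=\norm{\vec{B}^\ast}{2}^2$, which deserves its one-line proof. The projection lies in the slice $\beta'=\norm{\vec{B}^\ast}{2}^2$, so $d^2(\beta')$ is at most the squared projection distance. That distance equals $\min_\beta d^2(\beta)$ by Lemma~\ref{lem:double_min}, and uniqueness of the minimizer of $d^2$ forces $\beta'=\beta^\ast$.

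\emph{What the paper's buys.} It yields Lipschitz continuity of $\sqrt{f}$ and $f$ as by-products, and it needs no multiplier theory for the original nonlinear problem.
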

\begin{proof}
The upper bound is easy to justify, since $d^2(\beta) = f(\beta) + h(\beta)$, where $f(\beta)$ is continuous, increasing, and convex, and $h(\beta)$ is continuous, strictly convex, and attains its minimum at $\norm{\vec{z}}{2}^2$. It remains only to derive a lower bound.
\par 
For any given point $(u,\vec{v},w)$ and $0 \leq \beta_1 < \beta_2 \leq \norm{\vec{z}}{2}^2$, recall the definition of the function $f$ in \eqref{eq:def_f_and_h}, we have
\begin{align*}
\abs{\sqrt{f(\beta_1)} - \sqrt{f(\beta_2)}} 
= \abs{\mathrm{dist}((u,\vec{v},w), F^{\varepsilon}_{\beta_1}) - \mathrm{dist}((u,\vec{v},w),F^{\varepsilon}_{\beta_2})}
\leq \mathrm{dist}_\mathrm{H}(F^{\varepsilon}_{\beta_1},F^{\varepsilon}_{\beta_2}),
\end{align*}
where the Hausdorff distance between two sets $F^{\varepsilon}_{\beta_1}$ and $F^{\varepsilon}_{\beta_2}$ is defined as follows
\begin{align*}
\mathrm{dist}_\mathrm{H}(F^{\varepsilon}_{\beta_1}, F^{\varepsilon}_{\beta_2}) 
= \max\Big\{\sup_{\vec{\xi}\in F^{\varepsilon}_{\beta_1}}\mathrm{dist}(\vec{\xi}, F^{\varepsilon}_{\beta_2}),~ \sup_{\vec{\eta}\in F^{\varepsilon}_{\beta_2}}\mathrm{dist}(F^{\varepsilon}_{\beta_1}, \vec{\eta})\Big\}.
\end{align*}
Notice, for any $\vec{\xi} \in F^{\varepsilon}_{\beta_1}$, there exists an $\vec{\eta} = \vec{\xi} + (0, \vec{0}, \frac{1}{2}(\beta_2-\beta_1)) \in F^{\varepsilon}_{\beta_2}$. The distance between this pair of points is $\frac{1}{2}\abs{\beta_2-\beta_1}$. Thus, we have
\begin{subequations}
\begin{align}\label{eq:Hausdorff_1}
\sup_{\vec{\xi}\in F^{\varepsilon}_{\beta_1}} \mathrm{dist}(\vec{\xi}, F^{\varepsilon}_{\beta_2}) \leq \frac{1}{2}\abs{\beta_2-\beta_1}.
\end{align}
Similarly, for any $\vec{\eta} \in F^{\varepsilon}_{\beta_2}$, there exists a $\vec{\xi} = \vec{\eta} - (0, \vec{0}, \frac{1}{2}(\beta_2-\beta_1)) \in F^{\varepsilon}_{\beta_1}$. The distance between this pair of points is $\frac{1}{2}\abs{\beta_2-\beta_1}$. Thus, we have
\begin{align}\label{eq:Hausdorff_2}
\sup_{\vec{\eta}\in F^{\varepsilon}_{\beta_2}} \mathrm{dist}(F^{\varepsilon}_{\beta_1}, \vec{\eta}) \leq \frac{1}{2}\abs{\beta_2-\beta_1}.
\end{align}
\end{subequations}
Combining \eqref{eq:Hausdorff_1} and \eqref{eq:Hausdorff_2}, we obtain $\abs{\sqrt{f(\beta_1)} - \sqrt{f(\beta_2)}} \leq \frac{1}{2}\abs{\beta_2 - \beta_1}$ holds for any $\beta_1$ and $\beta_2 \geq 0$.
Specifically, since $f(\beta)$ is increasing, we get for any $\beta\geq 0$, 
\begin{align*}
\sqrt{f(\beta)} \leq \sqrt{f(0)} + \frac{1}{2}\beta.
\end{align*}
Applying the inequality above with $\beta = \beta_1$ and $\beta = \beta_2$, respectively, and combining the two resulting bounds, we obtain
\begin{align*}
\sqrt{f(\beta_1)} + \sqrt{f(\beta_2)} \leq 2\sqrt{f(0)} + \frac{1}{2}(\beta_1 + \beta_2) \leq 2\sqrt{f(0)} + \norm{\vec{z}}{2}^2.
\end{align*}
Therefore, we obtain $f(\beta)$ is Lipschitz continuous on the interval $[0, \norm{\vec{z}}{2}^2]$, as for any $\beta_1,\beta_2\in [0, \norm{\vec{z}}{2}^2]$, we have
\begin{align*}
\abs{f(\beta_1) - f(\beta_2)} 
&= \big(\sqrt{f(\beta_1)} + \sqrt{f(\beta_2)}\big)\,\! \abs{\sqrt{f(\beta_1)} - \sqrt{f(\beta_2)}}\\
&\leq (\sqrt{f(0)} + \frac{1}{2}\norm{\vec{z}}{2}^2)\, \abs{\beta_1 - \beta_2}.
\end{align*}
We derive the lower bound \eqref{eq:lower_bound_d2} from a subgradient estimate. Let $\partial f$ denote the subgradient of $f$. For any $\beta^+ > \beta\geq0$, we know $f(\beta^+) \geq f(\beta) + \partial{f}(\beta)(\beta^+ - \beta)$. Recall $f$ is increasing and Lipschitz continuous, we have 
\begin{align*}
\partial{f}(\beta)(\beta^+ - \beta) \leq f(\beta^+) - f(\beta) \leq (\sqrt{f(0)} + \frac{1}{2}\norm{\vec{z}}{2}^2)\, (\beta^+ - \beta).
\end{align*}
Thus, we get $\partial f \leq \sqrt{f(0)} + \frac{1}{2}\norm{\vec{z}}{2}^2$ on $[0, \norm{\vec{z}}{2}^2]$.
Recall that $d^2(\beta)$ is strictly convex, then 
\begin{align*}
0 \in \partial d^2(\beta^\ast) = \partial f(\beta^\ast) + 1 - \frac{\norm{\vec{z}}{2}}{\sqrt{\beta^\ast}} 
\quad\Rightarrow\quad 
\frac{\norm{\vec{z}}{2}}{\sqrt{\beta^\ast}} - 1 \leq \sqrt{f(0)} + \frac{1}{2}\norm{\vec{z}}{2}^2.
\end{align*}
After solving $\beta^\ast$ from above, we obtain the lower bound \eqref{eq:lower_bound_d2}.
\end{proof}

\subsection{Slicing algorithm}
We are ready to design a scheme to compute the projection of a given point $(u, \vec{v}, w, \vec{z})$ onto the numerical admissible set $G^\varepsilon$ of the MHD system. 
\par 
If $\vec{z} = \vec{0}$, let $\rho(0), \vec{m}(0), E(0)$ be the solution of the minimization problem \eqref{eq:problem_org_z0}, whose closed-form expression is derived from KKT conditions in \cite{liu2025efficient}. In this case, $(\rho(0), \vec{m}(0), E(0), \vec{0})$ is the projection point. 
Otherwise, for $\vec{z} \neq \vec{0}$, define a closed search interval $I = [\beta_\mathrm{low}, \norm{\vec{z}}{2}^2]$, where the lower bound
\begin{align*}
\beta_\mathrm{low} = \left(\frac{\norm{\vec{z}}{2}}{1 + \sqrt{f(0)} + \frac{\norm{\vec{z}}{2}^2}{2}}\right)^2 ~\text{and}~ f(0) = \abs{\rho(0) - u}^2 + \norm{\vec{m}(0) - \vec{v}}{2}^2 + \abs{E(0) - w}^2.
\end{align*}
For $\beta \in I$, let $\rho(\beta), \vec{m}(\beta), E(\beta)$ be the solution to the following minimization problem:
\begin{align*}
\min_{\rho, \vec{m}, E}&~~ \abs{\rho-u}^2 + \norm{\vec{m}-\vec{v}}{2}^2 + \abs{E-w}^2 \nonumber\\
\text{subject to}&~~ \rho\geq\varepsilon ~~\text{and}~~ E - \frac{\norm{\vec{m}}{2}^2}{2\rho} \geq \varepsilon + \frac{\beta}{2},
\end{align*}
whose exact formulation can be obtained via KKT conditions, see Appendix~\ref{sec:appendix_projection_F_beta}.
Solve the single-variable minimization problem $\beta^\ast = \mathrm{argmin}\, d^2(\beta)$ on $I$, where
\begin{align*}
d^2(\beta) = \abs{\rho(\beta) - u}^2 + \norm{\vec{m}(\beta) - \vec{v}}{2}^2 + \abs{E(\beta) - w}^2 + (\sqrt{\beta} - \norm{\vec{z}}{2})^2.
\end{align*}
Notice $d^2(\beta) = f(\beta) + h(\beta)$, where $f(\beta)$ is continuous, increasing, and convex, and $h(\beta)$ is continuous and strictly convex. 
We can utilize the Brent method, see Appendix~\ref{sec:appendix_brent}, to compute the minimizer $\beta^\ast$ in the interval $I$ efficiently.
Then the projection point is $(\rho(\beta^\ast), \vec{m}(\beta^\ast), E(\beta^\ast), \sqrt{\beta^\ast}\vec{z}/\norm{\vec{z}}{2})$.

\begin{remark}
In multi-dimensional spaces, directly solving KKT conditions to derive the projection point for the MHD system is highly complicated, rendering it impractical in practice. 
This procedure involves solving high order polynomial equations of degree greater than four, for which no closed-form formulation exists, and therefore requires numerical root-finding. 
Although computing the eigenvalues of the companion matrix provides an efficient numerical approach for root-finding, roundoff errors can introduce spurious complex roots with small imaginary parts, which further significantly harm accuracy.
In contrast, our method operates only with real numbers, ensuring both numerical stability and guaranteed accuracy.  
\end{remark}

\section{Numerical experiments}\label{sec:numerical_experiments}
In this section, we first validate the slicing algorithm for computing the projection onto the numerical admissible set. We then present a convergence study on circularly polarized Alfv\'en waves. The rotor problem, Orszag--Tang problem, and high Mach number astrophysical jet demonstrate the accuracy and robustness of the DG scheme equipped with our limiter.
\par
In all simulations, we use $\IP^2$ DG discretization with the local Lax--Friedrichs flux, which is defined as follows:
\begin{align*}
\widehat{\vec{F}^a\cdot\normal_K}(\vec{U}^-, \vec{U}^+) = \frac{\vec{F}^a(\vec{U}^-) + \vec{F}^a(\vec{U}^+)}{2}\cdot\normal_K - \frac{\alpha_e}{2}(\vec{U}^+ - \vec{U}^-).
\end{align*}
Here, $\vec{F}^a$ represents the flux for all variables in MHD equations \eqref{eq:MHD_system}. The traces of $\vec{U}$ on the face of a cell from the interior and exterior are denoted by $\vec{U}^-$ and $\vec{U}^+$, respectively. The outward unit normal to cell $K$ is denoted by $\normal_K$, and $\alpha_e$ is the local fast magnetosonic wave speed on cell interface $e$.
We employ the three-stage, third-order SSP Runge--Kutta (RK) method for time integration. The time step is
\begin{align*}
\Delta t = \mathrm{CFL}\,\frac{\Delta x}{\max_e \alpha_e},
\end{align*}
where the CFL coefficient is $0.2$ unless otherwise specified. 
This base discretization does not preserve the invariant domain. The time step is deliberately chosen to stress the scheme so that cell averages can leave $G^\varepsilon$, thereby triggering the limiter.
\par
At the start of each time step, a discrete divergence-free projection is applied. After each RK stage, all cell averages are checked. If any violate $G^\varepsilon$, the optimization-based cell average limiter from Section~\ref{sec:limiter} is applied. The TVB limiter and the Zhang--Shu positivity-preserving limiter~\cite{zhang2017positivity} are then applied in sequence. The TVB parameter of value $100$ is used unless otherwise specified.

\subsection{Validation of the slicing algorithm}
We validate the slicing algorithm for computing the projection onto the numerical admissible set $G^\varepsilon$ of the MHD system on two test cases: a manufactured out-of-bound point and a representative out-of-bound cell average extracted from the astrophysical jet simulation in Section~\ref{sec:numerical_experiments:astro_jet}.
\par
For the manufactured test point, we set $\varepsilon = 10^{-13}$ and apply the algorithm to the out-of-bound point $[u,\vec{v},w,\vec{z}] = [1, 1.25, 2, 0, 2, 5, 1.7, 0]$, designed so that the minimizer $\beta^\ast$ lies well inside the search interval $I = [\beta_\mathrm{low}, \norm{\vec{z}}{2}^2]$.
The left of Figure~\ref{fig:slicing_validation} shows $d^2(\beta)$, $f(\beta)$, and $h(\beta)$ over $I = [0.121, 27.89]$.
The function $d^2(\beta)$ is continuous and strictly convex with a unique minimizer $\beta^\ast \approx 5.44$ lying well inside $I$, and $f(\beta)$ is increasing and convex, consistent with our theory.
\par
For the astro jet out-of-bound point, we apply the algorithm to a representative out-of-bound cell average produced during the astrophysical jet simulation of case $B_0 = \sqrt{2000}$ and $\varepsilon = 10^{-6}$. To be more specific, we choose the point
\begin{align*}
[u,\vec{v},w,\vec{z}] \approx [1.41, 1124, -0.31, 0, 4.51\times10^5, 44.9, -0.026, 0].
\end{align*}
The right panel of Figure~\ref{fig:slicing_validation} shows $d^2(\beta)$, $f(\beta)$, and $h(\beta)$ over the search interval $I \approx [1.98\times10^{-3}, 2017.5]$.
The search interval is wide, reflecting that the theoretical lower bound $\beta_\mathrm{low}$ is not a sharp estimate in this case.
The minimizer $\beta^\ast \approx 2017.5$ is located near the upper boundary of $I$. The strong magnetic field dominates and the cell average violates the admissible set only slightly with negative pressure, so the optimal projection barely adjusts the magnetic energy.
This behavior is physically natural and expected in high-Mach, strong-field simulations.
\begin{figure}[ht!]
\begin{center}
\begin{tabularx}{\linewidth}{@{}c@{~~}c@{}}
\includegraphics[width=0.485\textwidth]{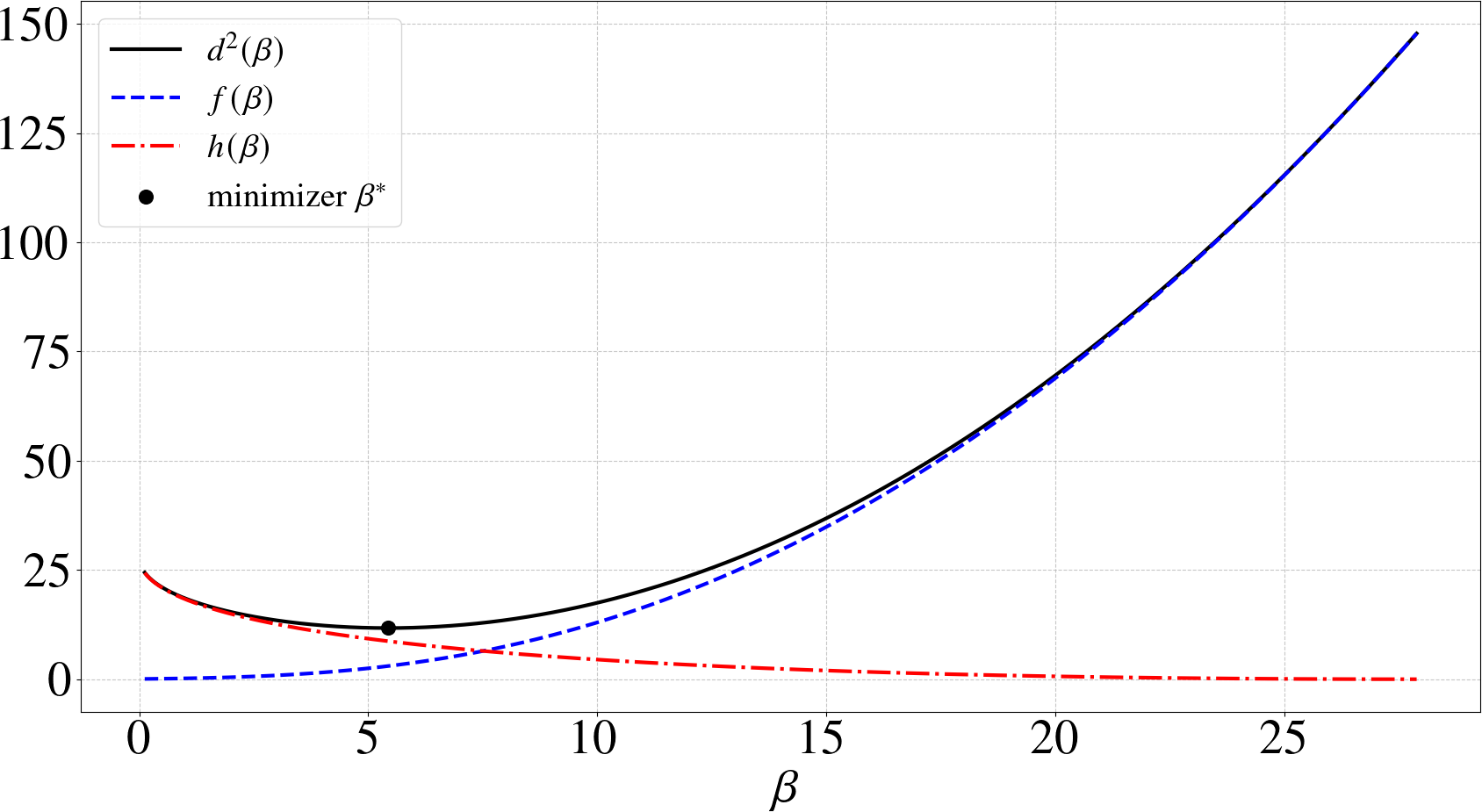} &
\includegraphics[width=0.485\textwidth]{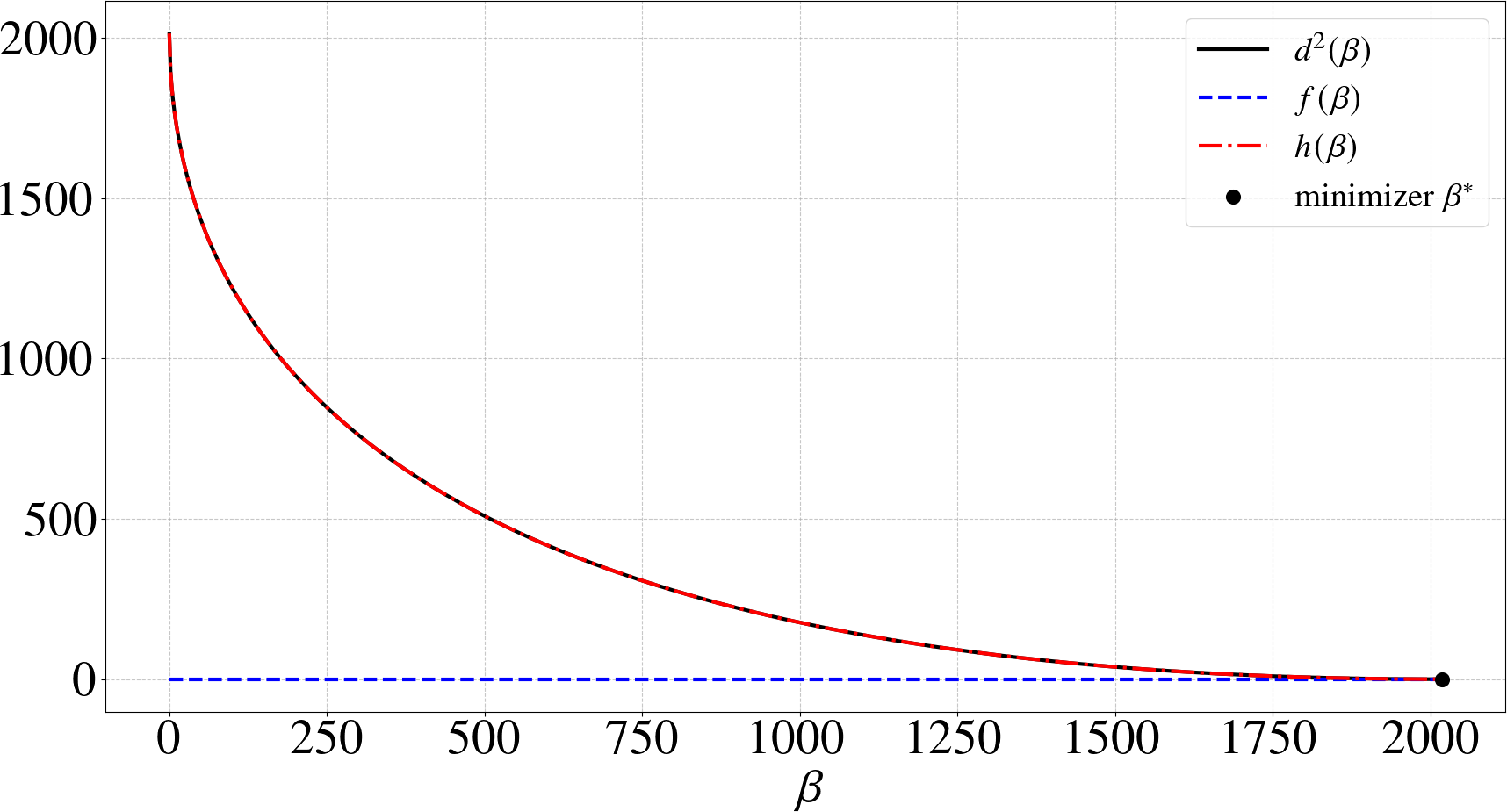}\\
\end{tabularx}
\end{center}
\caption{The functions $d^2(\beta)$, $f(\beta)$, and $h(\beta)$ over the search interval $I$, with the minimizer $\beta^\ast$ of $d^2(\beta)$ marked. Left: manufactured out-of-bound point where the minimizer lies well inside $I$. Right: representative out-of-bound cell average from the astrophysical jet simulation ($B_0 = \sqrt{2000}$, $\varepsilon = 10^{-6}$); the minimizer lies near the upper boundary of $I$, consistent with a slight pressure violation under a strong magnetic field.}
\label{fig:slicing_validation}
\end{figure}

\subsection{Smooth Alfv\'en wave}
The circularly polarized Alfv\'en wave is an exact non-polynomial solution of the MHD equations, widely used as a benchmark for verifying the accuracy of MHD solvers~\cite{toth2000b,rossmanith2006unstaggered}. In this smooth test, the base $\IP^2$ DG scheme achieves the expected optimal convergence rate, and the optimization-based cell average limiter is not triggered. 
\par
Let the computational domain be $[0, \sqrt{5}/2]\times [0, \sqrt{5}]$ with periodic boundary conditions on all sides, and $\gamma = 5/3$.
The wave propagates at angle $\theta = \tan^{-1}(0.5)$ to the $x$-axis, and $\zeta = x\cos\theta + y\sin\theta$ denotes the coordinate along the propagation direction. The initial conditions are $\rho^0 = 1$, $p^0 = 0.1$, and
\begin{align*}
v_\parallel^0 &= 0, & v_\perp^0 &= 0.1\sin(2\pi\zeta), & v_z^0 &= 0.1\cos(2\pi\zeta), \\
B_\parallel^0 &= 1, & B_\perp^0 &= 0.1\sin(2\pi\zeta), & B_z^0 &= 0.1\cos(2\pi\zeta).
\end{align*}
For testing convergence rates, the time step is fixed at $\Delta t = (0.08/\sqrt{5})\,\Delta x$, independent of the instantaneous wave speed.
\par
The Alfv\'en speed is $B_\parallel^0/\sqrt{\rho^0} = 1$. Thus, the transverse perturbations $v_\perp$, $v_z$, $B_\perp$, and $B_z$ propagate at unit speed in the $\zeta$ direction without distortion. The wave has both spatial and temporal period $1$. We choose final time $T = 2$, at which the solution completes two full periods and returns exactly to the initial state.
The exact solution at time $t$ is $\rho = 1$, $p = 0.1$, and
\begin{align*}
v_\parallel &= 0, & v_\perp &= 0.1\sin(2\pi(\zeta - t)), & v_z &= 0.1\cos(2\pi(\zeta - t)), \\
B_\parallel &= 1, & B_\perp &= 0.1\sin(2\pi(\zeta - t)), & B_z &= 0.1\cos(2\pi(\zeta - t)).
\end{align*}
Here, $v_\parallel$ and $B_\parallel$ are the components along the propagation direction $\transpose{(\cos\theta, \sin\theta, 0)}$, and $v_\perp$ and $B_\perp$ are the in-plane transverse components.
\par
Following~\cite{toth2000b}, the numerical error is measured on the four wave-carrying components $v_\perp$, $v_z$, $B_\perp$, and $B_z$. The remaining variables $\rho$, $p$, $v_\parallel$, and $B_\parallel$ are constant in time and carry no wave dynamics.
The discrete $L_h^1$ and $L_h^\infty$ errors for numerical solutions $v_{\perp,h}$, $v_{z,h}$, $B_{\perp,h}$, and $B_{z,h}$ at time $t^n$ on cell $K$ are computed following the formula in \cite{hesthaven2008nodal}. Define errors on a mesh $\setE_h$ with resolution $\Delta x$ as follows:
\begin{align*}
\mathtt{err}_{\Delta x}^1 = \sum_{K\in\setE_h} \frac{1}{4}\Big(&\norm{v_{\perp,h}-v_{\perp}}{L_h^1(K)} + \norm{v_{z,h}-v_{z}}{L_h^1(K)} \\ &+ \norm{B_{\perp,h}-B_{\perp}}{L_h^1(K)} + \norm{B_{z,h}-B_{z}}{L_h^1(K)}\Big),\\
\mathtt{err}_{\Delta x}^\infty = \max_{K\in\setE_h}\frac{1}{4}\Big(&\norm{v_{\perp,h}-v_{\perp}}{L_h^\infty(K)} + \norm{v_{z,h}-v_{z}}{L_h^\infty(K)} \\ &+ \norm{B_{\perp,h}-B_{\perp}}{L_h^\infty(K)} + \norm{B_{z,h}-B_{z}}{L_h^\infty(K)}\Big).
\end{align*}
Then, the convergence rate is evaluated by $\ln(\mathtt{err}_{\Delta x}/\mathtt{err}_{\Delta x/2})/\ln{2}$.
Table~\ref{tab:alfven_convergence} shows the $\mathtt{err}_{\Delta x}^1$ and $\mathtt{err}_{\Delta x}^\infty$ errors at final time $T = 2$. We obtain the optimal order of convergence.
\begin{table}[ht!]
\centering
\begin{tabularx}{0.9\linewidth}{@{~~}c@{~~}|c@{~~}|C@{~~}|c@{~~}|C@{~~}|c@{~~}}
\toprule
$\Delta x$ & $\Delta t$ & $\mathtt{err}_{\Delta x}^1$ & rate & $\mathtt{err}_{\Delta x}^\infty$ & rate \\
\midrule                   
$\sqrt{5}\cdot2^{-5}$ & $2^{-2}\cdot10^{-2}$ & $1.345\cdot10^{-4}$ & ---   & $2.661\cdot10^{-4}$ & ---   \\
$\sqrt{5}\cdot2^{-6}$ & $2^{-3}\cdot10^{-2}$ & $1.833\cdot10^{-5}$ & 2.876 & $3.520\cdot10^{-5}$ & 2.918 \\
$\sqrt{5}\cdot2^{-7}$ & $2^{-4}\cdot10^{-2}$ & $2.421\cdot10^{-6}$ & 2.921 & $4.516\cdot10^{-6}$ & 2.963 \\
$\sqrt{5}\cdot2^{-8}$ & $2^{-5}\cdot10^{-2}$ & $3.125\cdot10^{-7}$ & 2.954 & $5.619\cdot10^{-7}$ & 3.007 \\
\bottomrule
\end{tabularx}
\caption{Smooth Alfv\'en wave. The errors and convergence rates for the $\IP^2$ DG scheme.}
\label{tab:alfven_convergence}
\end{table}

\subsection{Rotor problem}
The rotor problem~\cite{balsara1999staggered,ciucua2020implicit} describes a rapidly rotating dense disk of fluid embedded in a lighter ambient medium permeated by a uniform magnetic field.
As the disk evolves, the magnetic field brakes its rotation and launches torsional Alfv\'{e}n waves into the surrounding medium, providing a robust test due to the sharp density contrast and the development of low-pressure regions.
\par
The computational domain is $\Omega = [0,1]^2$ with outflow boundary conditions on $\partial\Omega$.
The initial conditions are $(p, u_3, B_1, B_2, B_3) = (0.5, 0, 2.5/\sqrt{4\pi}, 0, 0)$ and
\begin{align*}
(\rho, u_1, u_2) =
\begin{cases}
(10, -(y-0.5)/r_0, (x-0.5)/r_0)                   & \text{if}~ r \leq r_0, \\
(1+9\lambda, -\lambda(y-0.5)/r, \lambda(x-0.5)/r) & \text{if}~ r_0 < r \leq r_1, \\
(1, 0, 0)                                         & \text{if}~ r > r_1,
\end{cases}
\end{align*}
where $r = \sqrt{(x-0.5)^2+(y-0.5)^2}$, $r_0 = 0.1$, $r_1 = 0.115$, and $\lambda = (r_1-r)/(r_1-r_0)$. The adiabatic index $\gamma = 5/3$.
The domain is uniformly partitioned into a $300\times300$ mesh. The numerical admissible set tolerance is $\varepsilon = 10^{-9}$.
Figure~\ref{fig:Rotor} shows the density $\rho$, the thermal pressure $p$, and the Mach number $\abs{\vec{u}}/c_s$ with $c_s = \sqrt{\gamma p/\rho}$, at the final time $T = 0.295$.
The density snapshot shows the initially circular disk compressed into an elongated ring-shaped structure along the direction of the initial field $B_1$. The Mach number snapshot displays the torsional Alfv\'{e}n waves launched into the ambient medium.
The results agree with those in~\cite{wu2022provably}. This test validates the base scheme and the cell average limiter is not triggered.
\begin{figure}[ht!]
\begin{center}
\begin{tabularx}{\linewidth}{@{}c@{~}c@{~}c@{~}c@{~}c@{~}c@{}}
\includegraphics[width=0.24\textwidth]{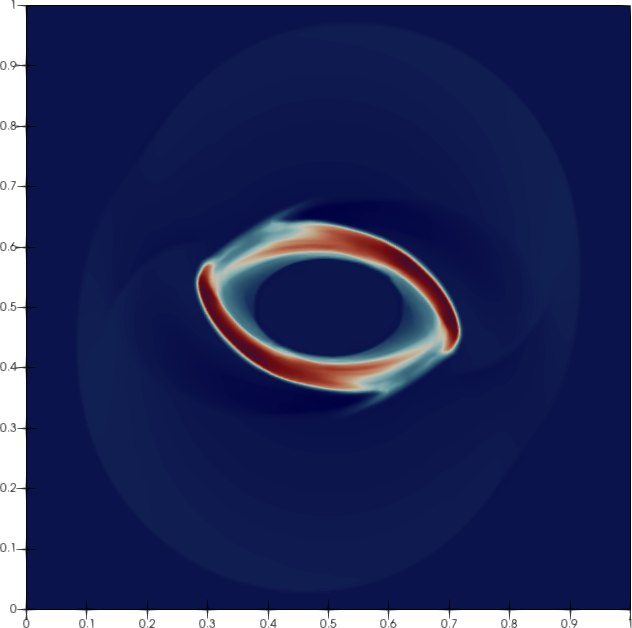} &
\includegraphics[width=0.073\textwidth]{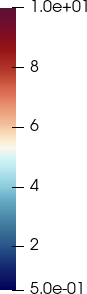} &
\includegraphics[width=0.24\textwidth]{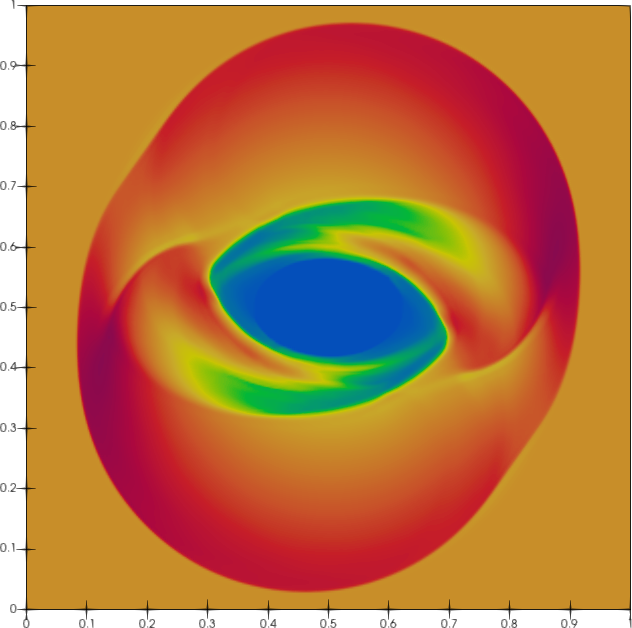} &
\includegraphics[width=0.0715\textwidth]{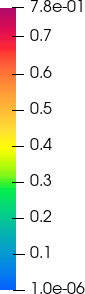} &
\includegraphics[width=0.24\textwidth]{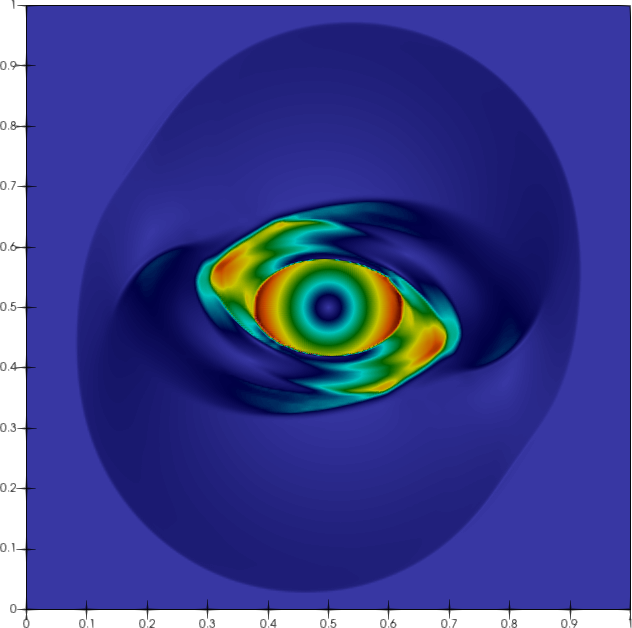} &
\includegraphics[width=0.076\textwidth]{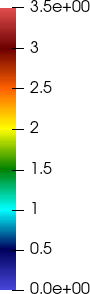} \\
\end{tabularx}
\end{center}
\caption{Rotor problem. Snapshots at $T = 0.295$. From left to right: density, thermal pressure, and Mach number.}
\label{fig:Rotor}
\end{figure}

\subsection{Orszag--Tang test}\label{sec:numerical_experiments:OT}
The Orszag--Tang vortex problem~\cite{toth2000b} is a classical benchmark for MHD solvers that has been widely used to assess numerical methods~\cite{wu2019provably,wu2025high}.
Although the initial conditions are smooth, multiple shocks develop and interact as time evolves, producing a complex turbulent-like flow structure that challenges the robustness of the numerical scheme.
The simulation runs to the final time $T = 3$, providing a stringent test of our algorithm to preserve admissibility over long-time integration.
\par
We choose the computational domain $\Omega = [0,2\pi]^2$ with periodic boundary conditions on all sides. The initial conditions are
\begin{align*}
\rho^0 = \gamma^2, &&
\vec{u}^0 = \begin{bmatrix}
-\sin{y} \\
\sin{x} \\
0
\end{bmatrix}, &&
\vec{B}^0 = \begin{bmatrix}
-\sin{y} \\
\sin{2x} \\
0
\end{bmatrix}, &&
\text{and}\quad
p^0 = \gamma,
\end{align*}
where the adiabatic index $\gamma = 5/3$. The domain $\Omega$ is uniformly partitioned into a $450 \times 450$ mesh with resolution $\Delta{x} = \pi/225$. The CFL coefficient is $0.7$ and the numerical admissible set tolerance is $\varepsilon = 10^{-9}$.
\par
Figure~\ref{fig:OT_solution} shows the snapshots of density and magnitude of magnetic field at time $t = 1, 2, 3$.
At $t = 1$, the vortex structure remains relatively coherent and the first shocks are beginning to form. By $t = 2$, multiple shocks have developed and begun to interact, and fine-scale features are visible in both the density and the magnitude of magnetic field. At $t = 3$, a fully complex multi-shock pattern has emerged in both fields, consistent with the expected transition toward turbulence. 
The results are in good agreement with those reported in the literature~\cite{wu2019provably,wu2025high,toth2000b}.
\par
The CFL coefficient $0.7$ is deliberately chosen well beyond the linear stability CFL to stress the scheme and trigger the cell average limiter.
Figure~\ref{fig:OT_DY} shows the total number of DY iterations per time step (left) and the convergence of the DY method in the first RK stage at a representative time (right). To measure convergence, we run the DY method for sufficiently many iterations to approximate the minimizer $\vecc{X}^\ast$ and the fixed point $\vecc{Z}^\ast$ numerically. 
Each minimization problem involves $1.215$ million unknowns ($6 \times 450 \times 450$), yet converges in only a few steps with error monotonically decreasing below the tolerance $\varepsilon = 10^{-9}$. Asymptotic linear convergence is observed, consistent with the optimization-based limiter applied to the Euler equations and compressible Navier--Stokes equations~\cite{liu2025efficient,liu2024optimization}.
\begin{figure}[ht!]
\begin{center}
\begin{tabularx}{0.9\linewidth}{@{}c@{~}c@{~}c@{~}c@{~}c@{}}
\begin{sideways}{\hspace{0.8cm} density $\rho$}\end{sideways} &
\includegraphics[width=0.26\textwidth]{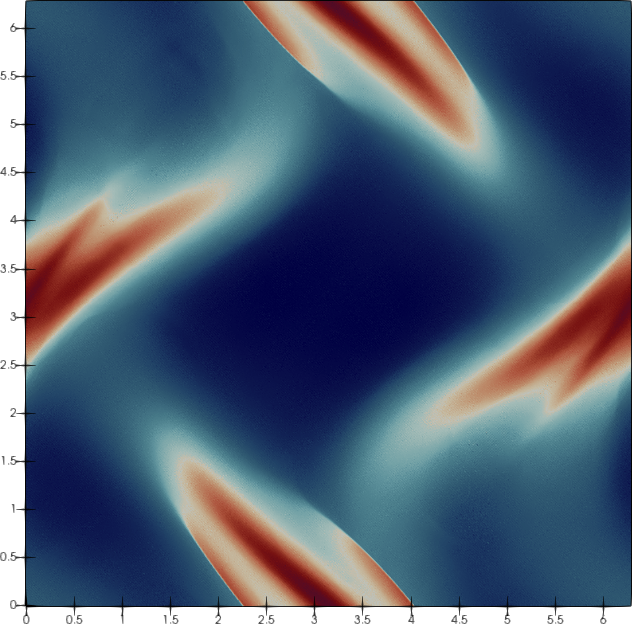} &
\includegraphics[width=0.26\textwidth]{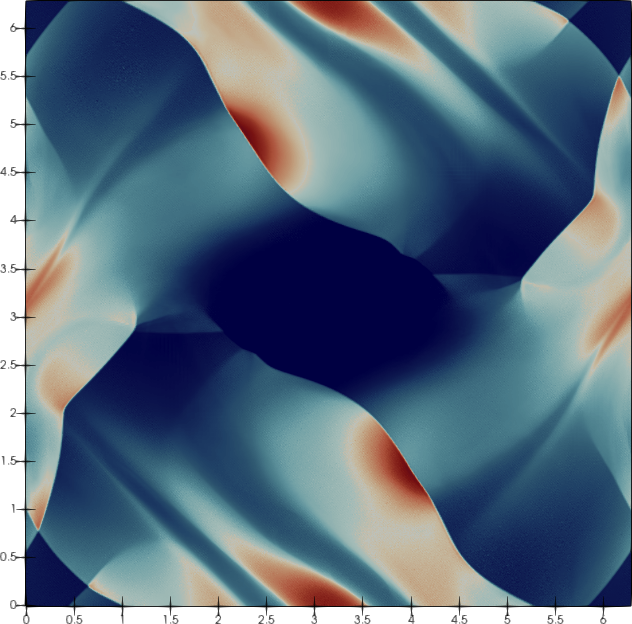} &
\includegraphics[width=0.26\textwidth]{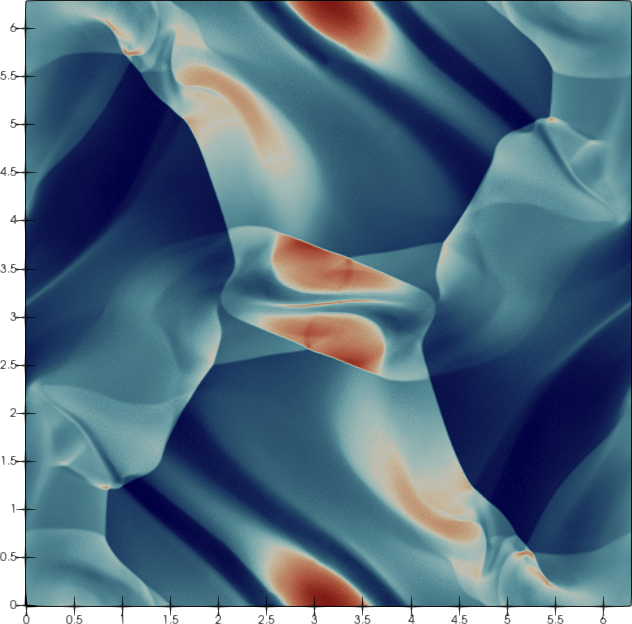} &
\includegraphics[width=0.08\textwidth]{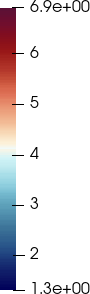} \\
\begin{sideways}{\hspace{0.4cm} magnitude of $\vec{B}$}\end{sideways} &
\includegraphics[width=0.26\textwidth]{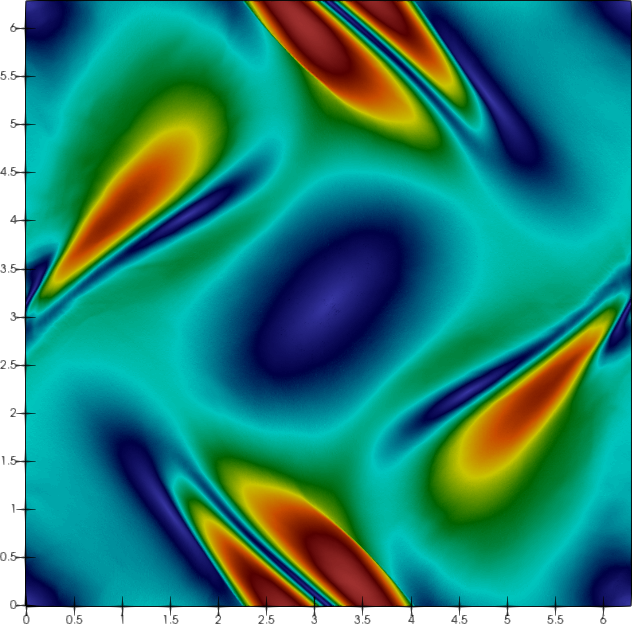} &
\includegraphics[width=0.26\textwidth]{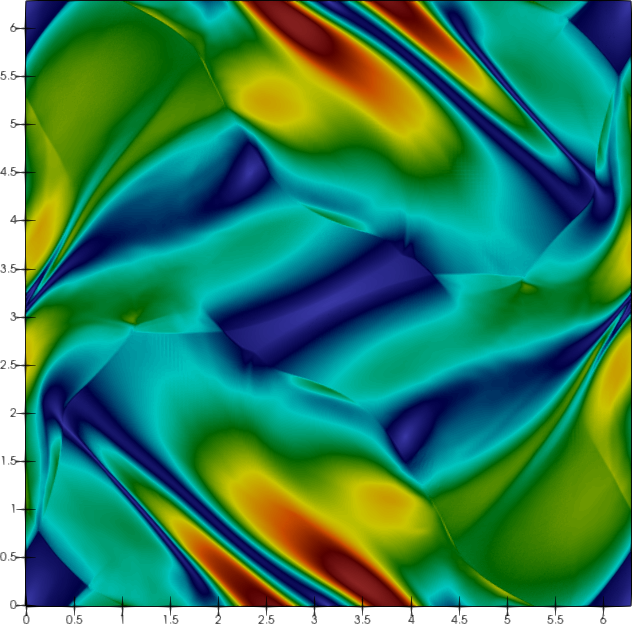} &
\includegraphics[width=0.26\textwidth]{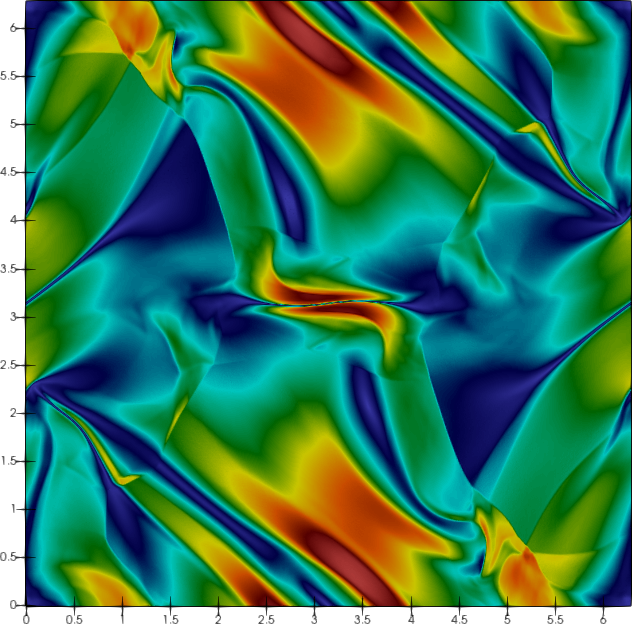} &
\includegraphics[width=0.08\textwidth]{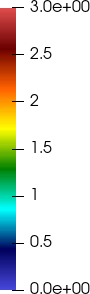} \\
~ & $t = 1$ & $t = 2$ & $t = 3$ & ~
\end{tabularx}
\end{center}
\caption{Orszag--Tang test. The snapshots of density field and magnitude of magnetic field are taken at $t = 1,2,3$. The cell average limiter is triggered. The complex wave structures, including multiple interacting shocks, are well resolved.}
\label{fig:OT_solution}
\end{figure}
\begin{figure}[ht!]
\begin{center}
\begin{tabularx}{0.75\linewidth}{@{}c@{\quad}c@{}}
\includegraphics[width=0.35\textwidth]{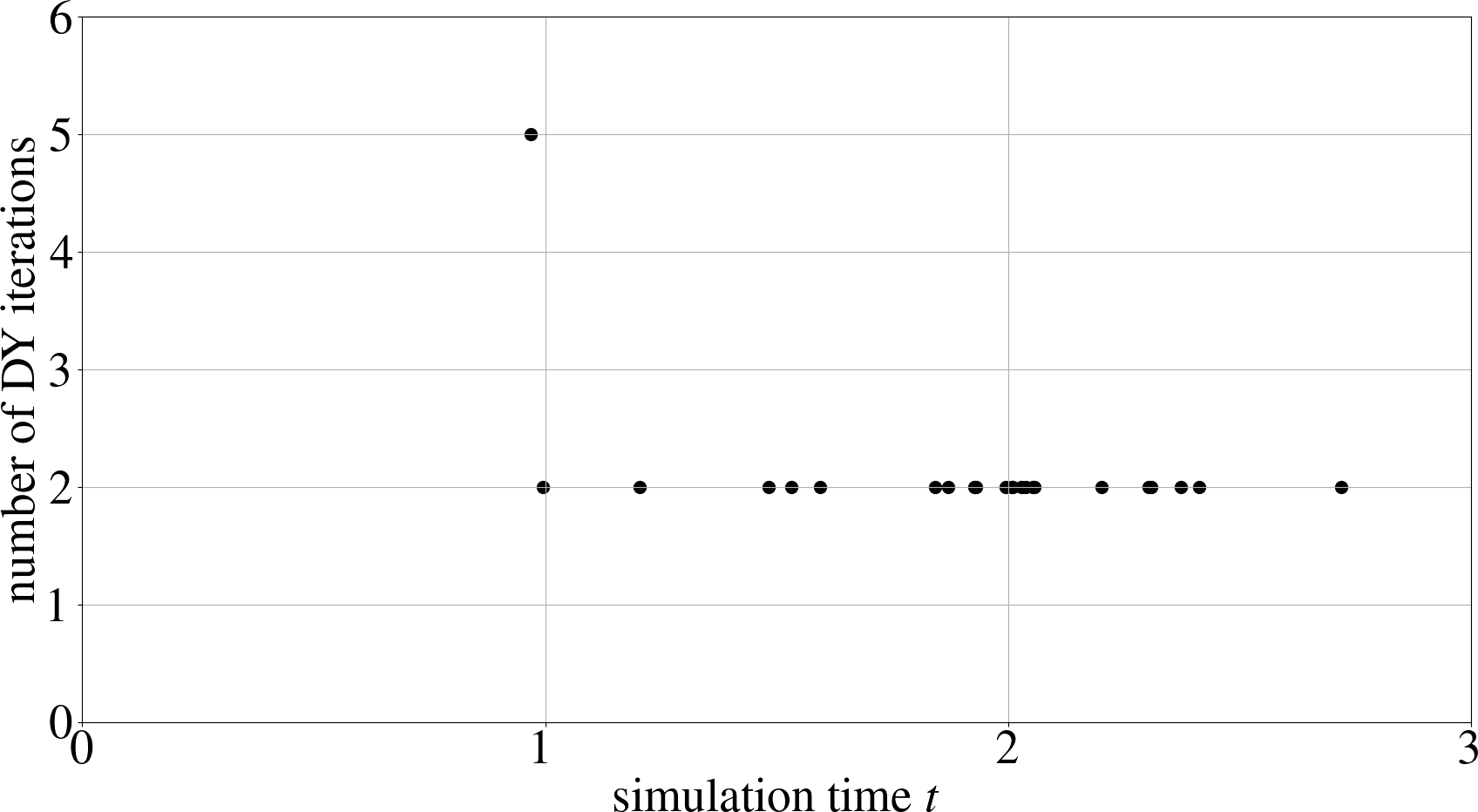} & 
\includegraphics[width=0.35\textwidth]{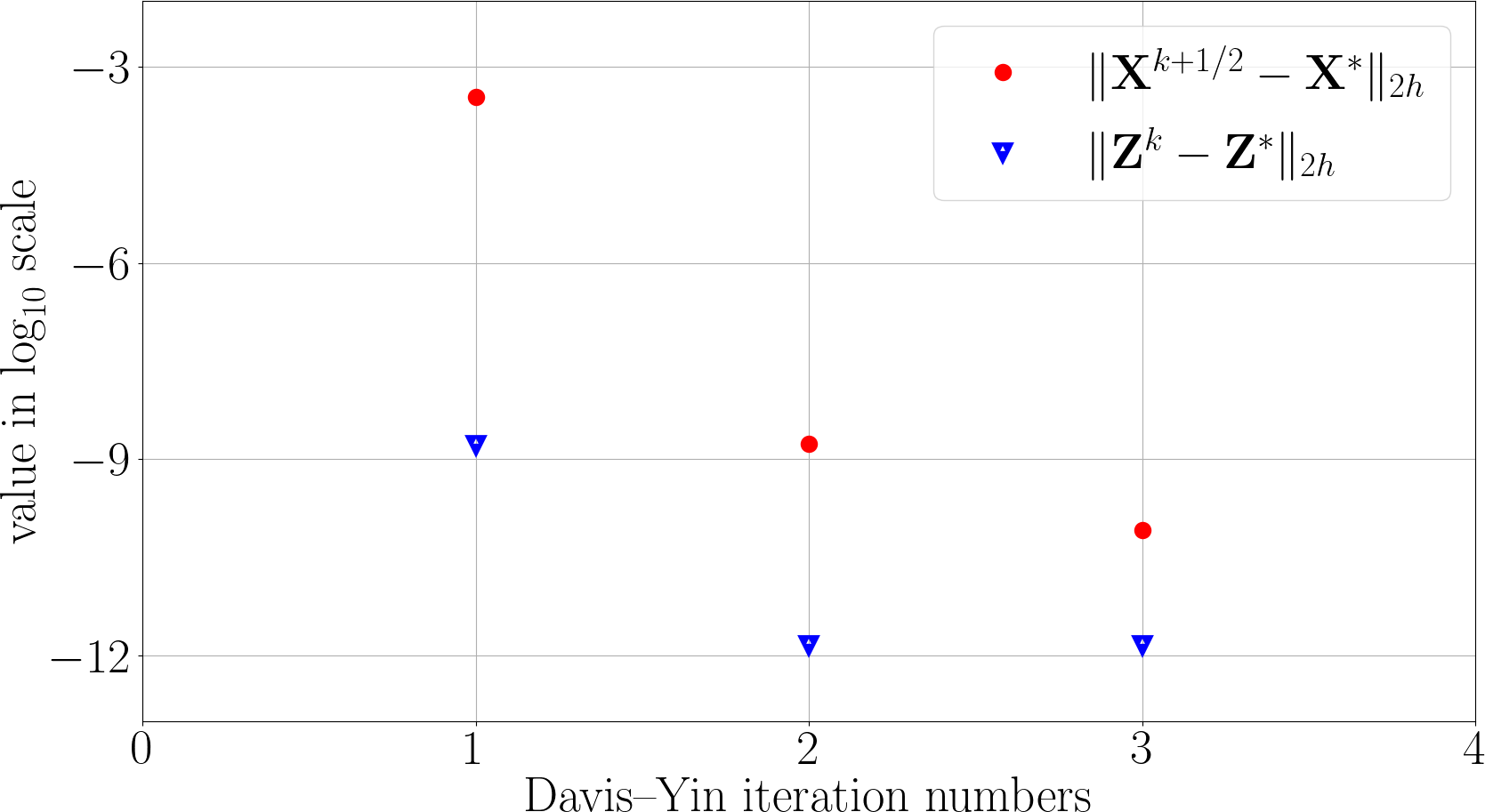} \\
\end{tabularx}
\end{center}
\caption{Left: total DY iterations per time step summed over all three RK stages. The limiter is triggered at $23$ time steps throughout the simulation. Right: convergence of the DY iteration in the first RK stage at time $t \approx 0.969$.}
\label{fig:OT_DY}
\end{figure}

\subsection{High Mach number astrophysical jet}\label{sec:numerical_experiments:astro_jet}
We simulate a Mach $800$ astrophysical jet governed by the ideal MHD equations with $\gamma = 1.4$. The jet propagates at extremely high speed into a low-density, low-pressure ambient medium, producing strong shocks and complex magnetized flow structures. Both density and pressure can easily become negative, making this a challenging benchmark for invariant-domain-preserving solvers~\cite{wu2022provably,wu2019provably}. The simulation crashes easily without the optimization-based cell average limiter.
\par
The computational domain is $\Omega = [0, 1.5]\times [0, 0.75]$ with final time $T = 0.002$. Using the reflective symmetry of the problem about the $x$-axis, results are reflected to the domain $[0, 1.5] \times [-0.75, 0.75]$.
The domain is initially filled with the ambient plasma $(\rho^0, \vec{u}^0, p^0) = (0.14, \vec{0}, 1)$. The magnetic field is initialized as $\vec{B}^0 = \transpose{(B_0, 0, 0)}$ over the entire domain.
We consider two cases: $B_0 = \sqrt{200}$ with plasma beta $2p^0/\norm{\vec{B}^0}{2}^2 = 10^{-2}$, and $B_0 = \sqrt{2000}$ with plasma beta $10^{-3}$.
On the left boundary, the jet inflow condition
\begin{align*}
(\rho, u_x, u_y, u_z, B_x, B_y, B_z, p) = (1.4, 800, 0, 0, B_0, 0, 0, 1)
\end{align*}
is imposed for $\{\abs{y} \leq 0.05\}$. Let $\normal$ denote the outward normal. Outside the nozzle $\{\abs{y} > 0.05\}$, the following ambient state 
\begin{align*}
(\rho, u_x, u_y, u_z, B_x, B_y, B_z, p) = (0.14, 0, 0, 0, B_0, 0, 0, 1)
\end{align*}
is prescribed when $\vec{u}_h\cdot\normal \leq 0$ 
and the outflow condition is applied when $\vec{u}_h\cdot\normal > 0$. 
The bottom $\{y = 0\}$ uses a reflective boundary condition. The right and top boundaries are treated as outflow.
We use $\IP^2$ DG discretization. The domain $\Omega$ is uniformly partitioned into $600 \times 300$ square cells of resolution $\Delta x = 1/400$. The TVB limiter parameter is $110$ and the numerical admissible set tolerance $\varepsilon = 10^{-6}$.
\par
The optimization-based cell average limiter is triggered during the simulation whenever out-of-bound cell averages appear.
To measure convergence, we run the DY algorithm for sufficiently many iterations to approximate the minimizer $\vec{X}^\ast$ and the fixed point $\vec{Z}^\ast$ numerically. The $\norm{\vec{X}^{k+1/2} - \vec{X}^\ast}{2h}$ and $\norm{\vec{Z}^k - \vec{Z}^\ast}{2h}$ at a representative time step where the algorithm requires the most iterations are shown in the right subfigure of Figure~\ref{fig:astro_jet_DY}.
Similar to what is observed in~\cite{liu2025efficient,liu2024optimization}, the DY iteration exhibits asymptotic linear convergence, resolving the minimization problem efficiently in a small number of iterations.
The left and middle subfigures of Figure~\ref{fig:astro_jet_DY} show the total number of DY iterations per time step, summed over all three stages of the SSP RK time integrator, for both cases. The limiter is triggered at $668093$ time steps for $B_0 = \sqrt{200}$ and at $1217125$ time steps for $B_0 = \sqrt{2000}$, with at most $6$ and $8$ iterations per step, respectively.
\par
Figure~\ref{fig:astro_jet} shows the density $\rho$ and pressure $p$ in $\log_{10}$ scale, and the magnetic pressure $\frac{1}{2}\norm{\vec{B}}{2}^2$, at $T = 0.002$ for both cases. Our simulations complete without negative density or pressure throughout.
The bow shock and jet head location are consistent with those in~\cite{wu2022provably}, confirming the global conservation is preserved.
The optimization-based cell average limiter enforces admissibility by modifying the cell averages by the least amount necessary in $L^2$ norm and the scheme remains robust with linear stability $\mathrm{CFL}$.
\begin{figure}[ht!]
\begin{center}
\begin{tabularx}{\linewidth}{@{}c@{~}c@{~}c@{}}
\includegraphics[width=0.33\textwidth]{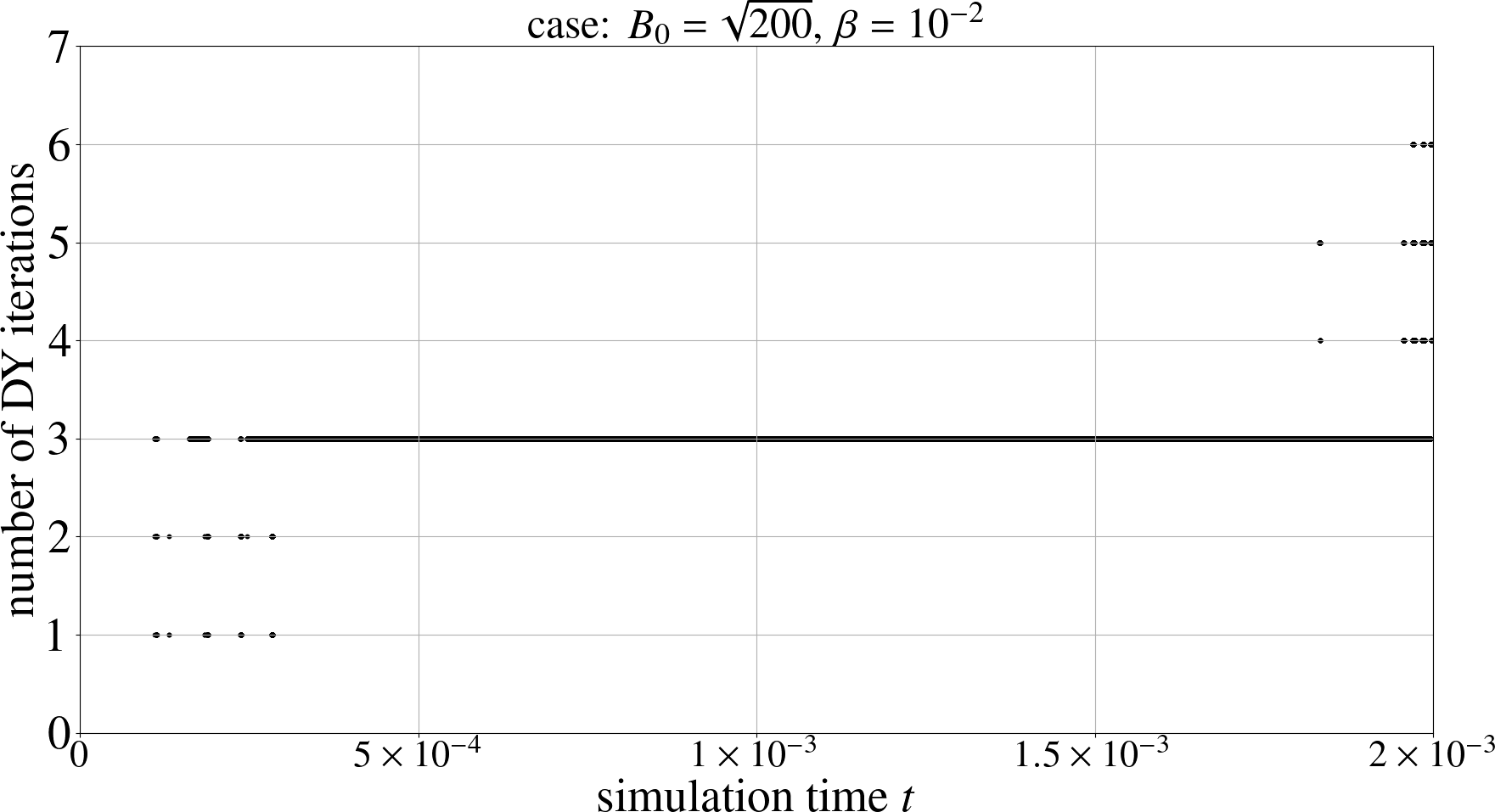} &
\includegraphics[width=0.33\textwidth]{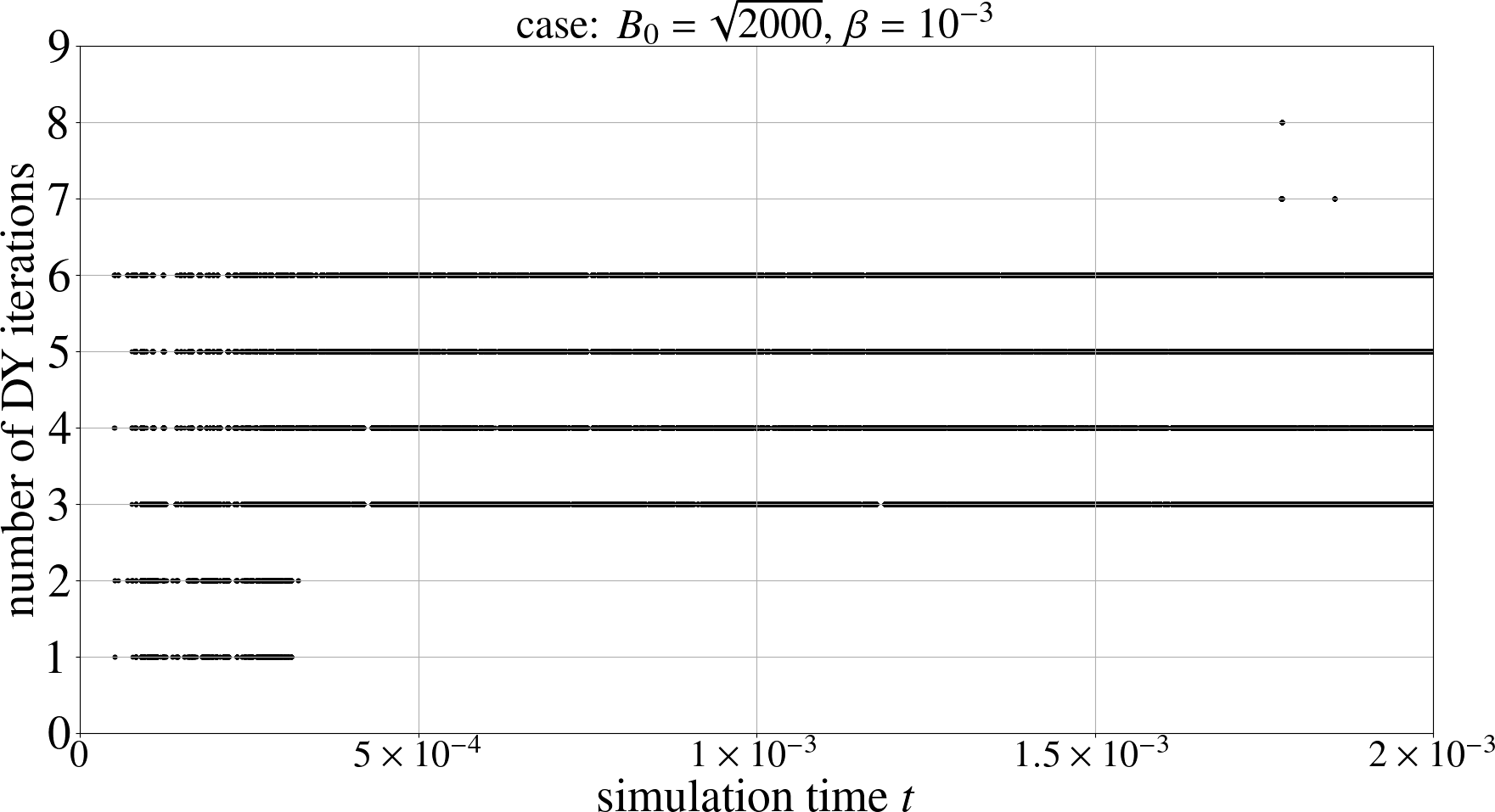} & 
\includegraphics[width=0.31\textwidth]{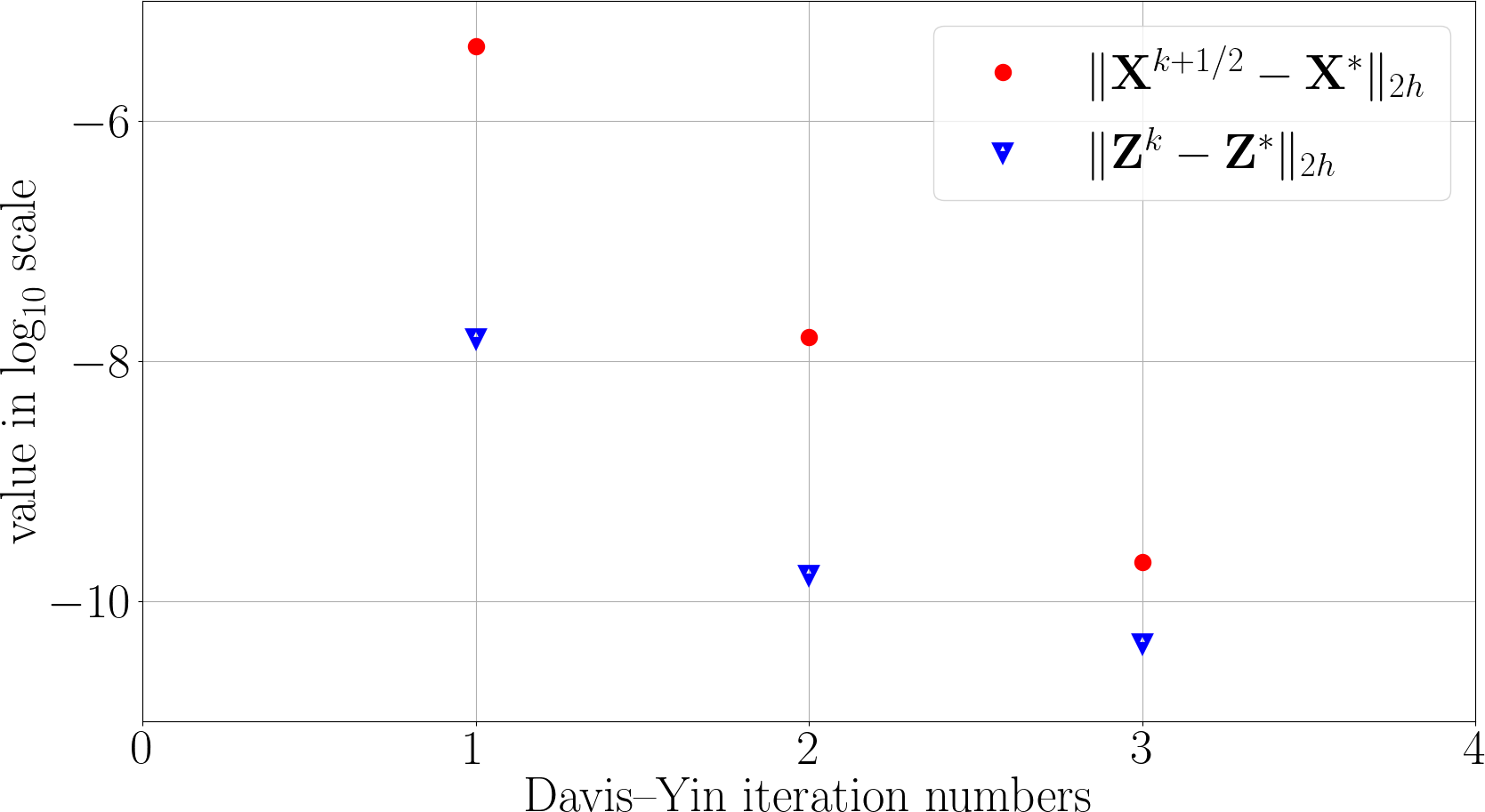} \\
\end{tabularx}
\end{center}
\caption{Left: total DY iterations per time step for $B_0 = \sqrt{200}$. Middle: total DY iterations per time step for $B_0 = \sqrt{2000}$. Right: DY convergence for case $B_0 = \sqrt{2000}$ when processing out-of-bound cell averages in the first RK stage at $t \approx 1.7760 \times 10^{-3}$. The asymptotic linear convergence is observed.}
\label{fig:astro_jet_DY}
\end{figure}
\begin{figure}[ht!]
\begin{center}
\begin{tabularx}{0.95\linewidth}{@{}c@{~}c@{\quad}c@{~}c@{}}
\includegraphics[width=0.375\textwidth]{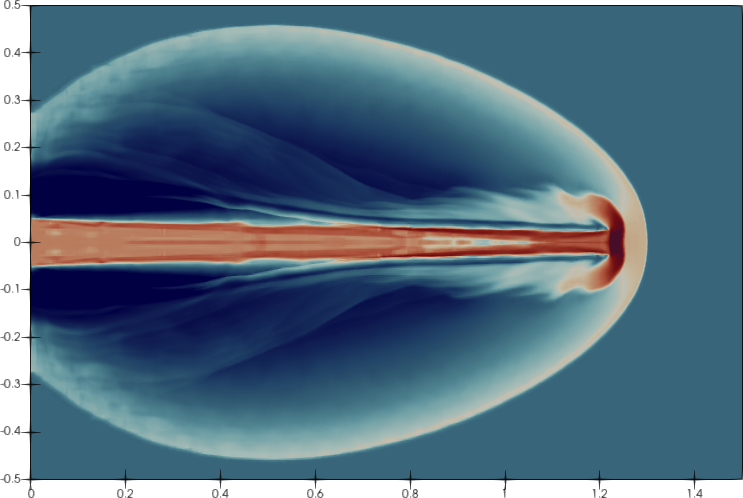} &
\includegraphics[width=0.083\textwidth]{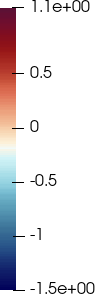} &
\includegraphics[width=0.375\textwidth]{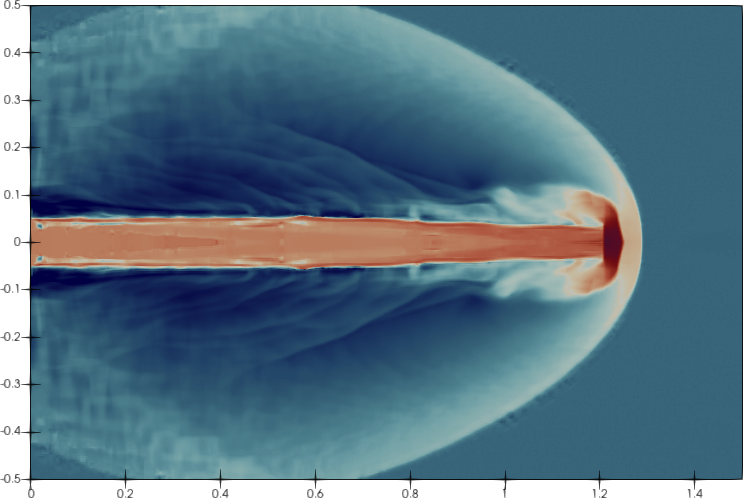} &
\includegraphics[width=0.083\textwidth]{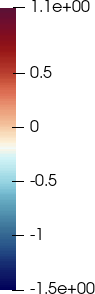} \\
\includegraphics[width=0.375\textwidth]{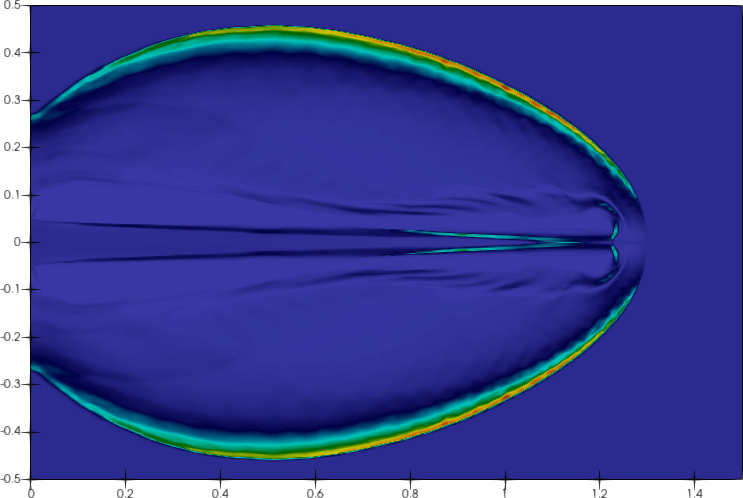} &
\includegraphics[width=0.079\textwidth]{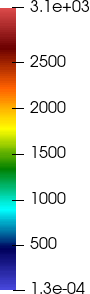} &
\includegraphics[width=0.375\textwidth]{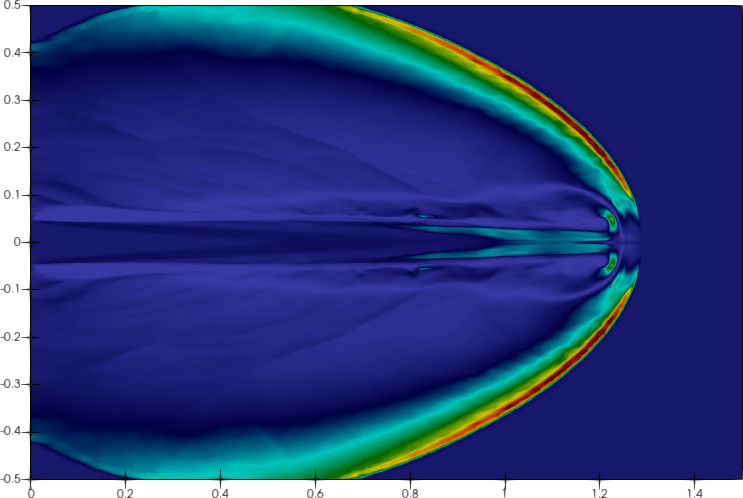} &
\includegraphics[width=0.079\textwidth]{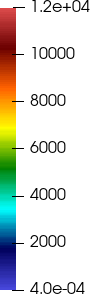} \\
\includegraphics[width=0.375\textwidth]{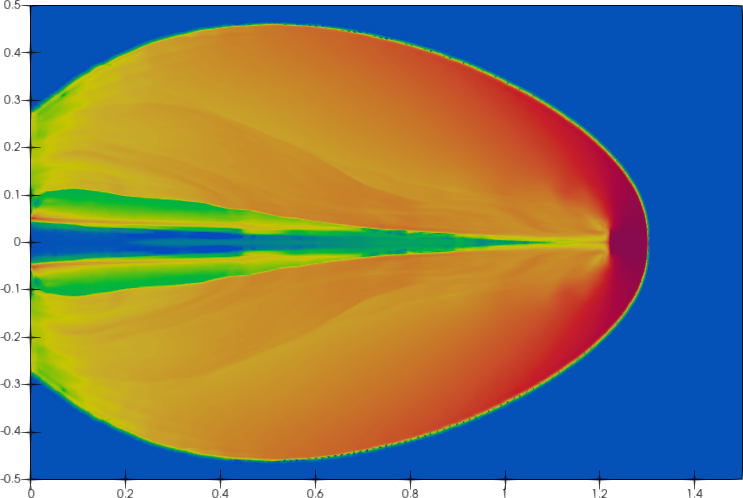} &
\includegraphics[width=0.079\textwidth]{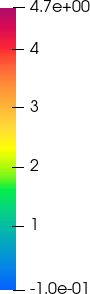} &
\includegraphics[width=0.375\textwidth]{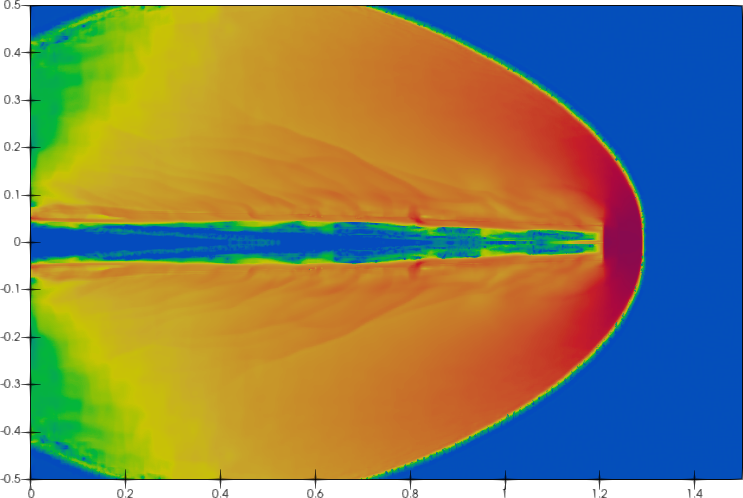} &
\includegraphics[width=0.079\textwidth]{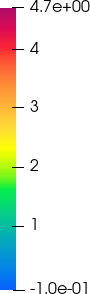} \\
\end{tabularx}
\end{center}
\caption{High Mach number astrophysical jet. Snapshots are taken at $T = 0.002$. Top to bottom: density $\rho$ in $\log_{10}$ scale, magnetic pressure $\frac{1}{2}\norm{\vec{B}}{2}^2$, and pressure $p$ in $\log_{10}$ scale. Left: $B_0 = \sqrt{200}$ with plasma beta $10^{-2}$. Right: $B_0 = \sqrt{2000}$ with plasma beta $10^{-3}$. The stronger magnetic field (plasma beta $10^{-3}$) produces a more collimated jet structure.}
\label{fig:astro_jet}
\end{figure}

\subsection{Efficiency of the slicing algorithm}
To assess the efficiency of the slicing algorithm on physically realistic test data, we extract $10^6$ out-of-bound cell averages from each of the two astrophysical jet simulations in Section~\ref{sec:numerical_experiments:astro_jet}, with $B_0 = \sqrt{200}$ and $B_0 = \sqrt{2000}$, on a $600\times 300$ mesh with numerical admissible set tolerance $\varepsilon = 10^{-6}$.
The Brent method is terminated with absolute tolerance $10^{-14}$ and relative tolerance $10^{-12}$.
All projected points are confirmed admissible in both cases.
\par
Figure~\ref{fig:slicing_efficiency} shows histograms of the number of calls to the Euler admissible set projection per MHD projection.
For $B_0 = \sqrt{200}$, the call count ranges from $16$ to $35$ with mean $27.34$ and standard deviation $2.40$.
For $B_0 = \sqrt{2000}$, the call count ranges from $16$ to $32$ with mean $27.00$ and standard deviation $3.23$.
The total wall-clock time for $10^6$ projections is approximately $5.82$ microseconds per MHD projection for $B_0=\sqrt{200}$ and $5.22$ microseconds per MHD projection for $B_0=\sqrt{2000}$, measured on a single core of an AMD EPYC 7742 processor (Bridges-2 cluster, PSC).
In these tests, each MHD projection requires only a small number of evaluations of the Euler admissible set projection, and each such evaluation is computed in closed form.
\begin{figure}[ht!]
\begin{center}
\begin{tabularx}{0.975\linewidth}{@{}c@{~}c@{}}
\includegraphics[width=0.48\textwidth]{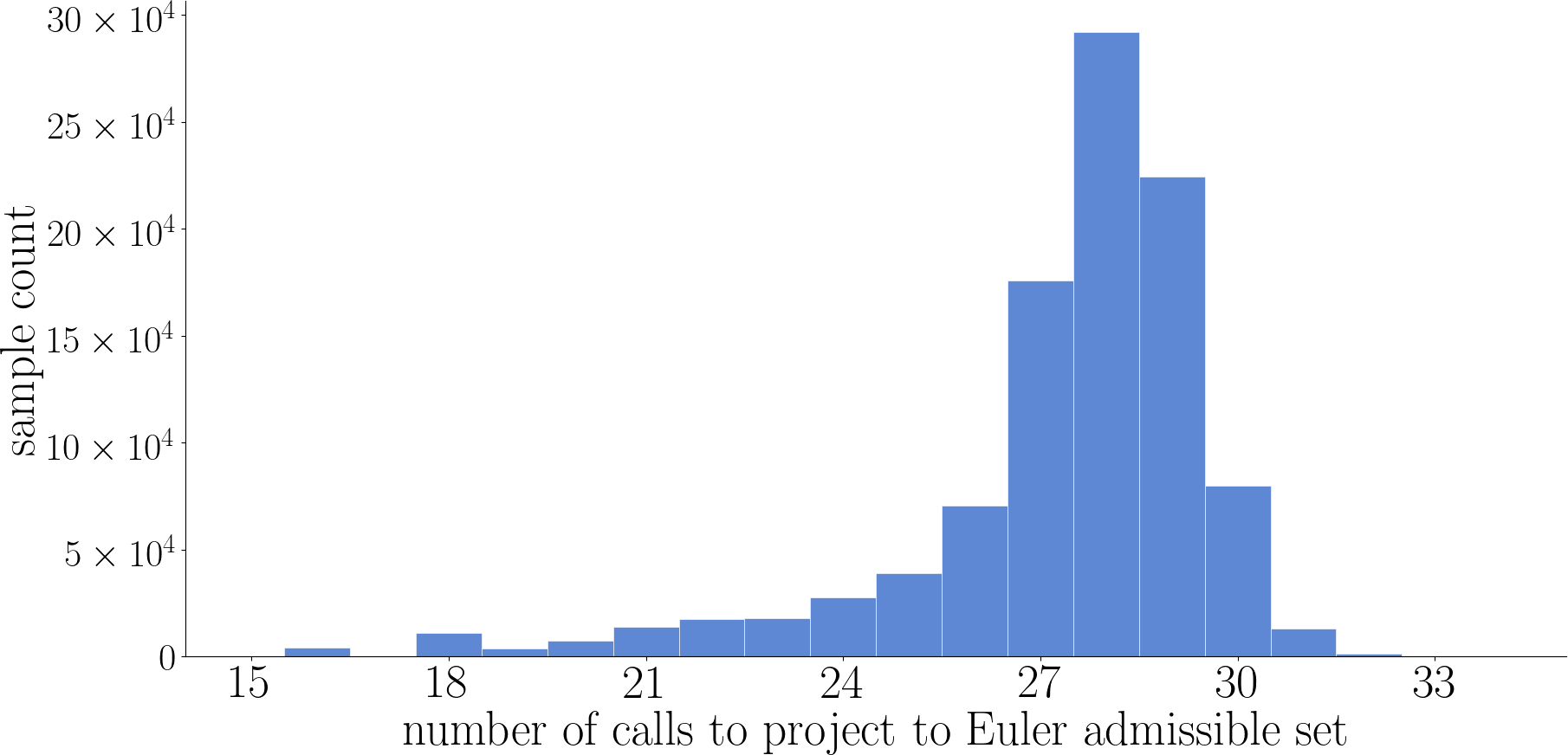} &
\includegraphics[width=0.48\textwidth]{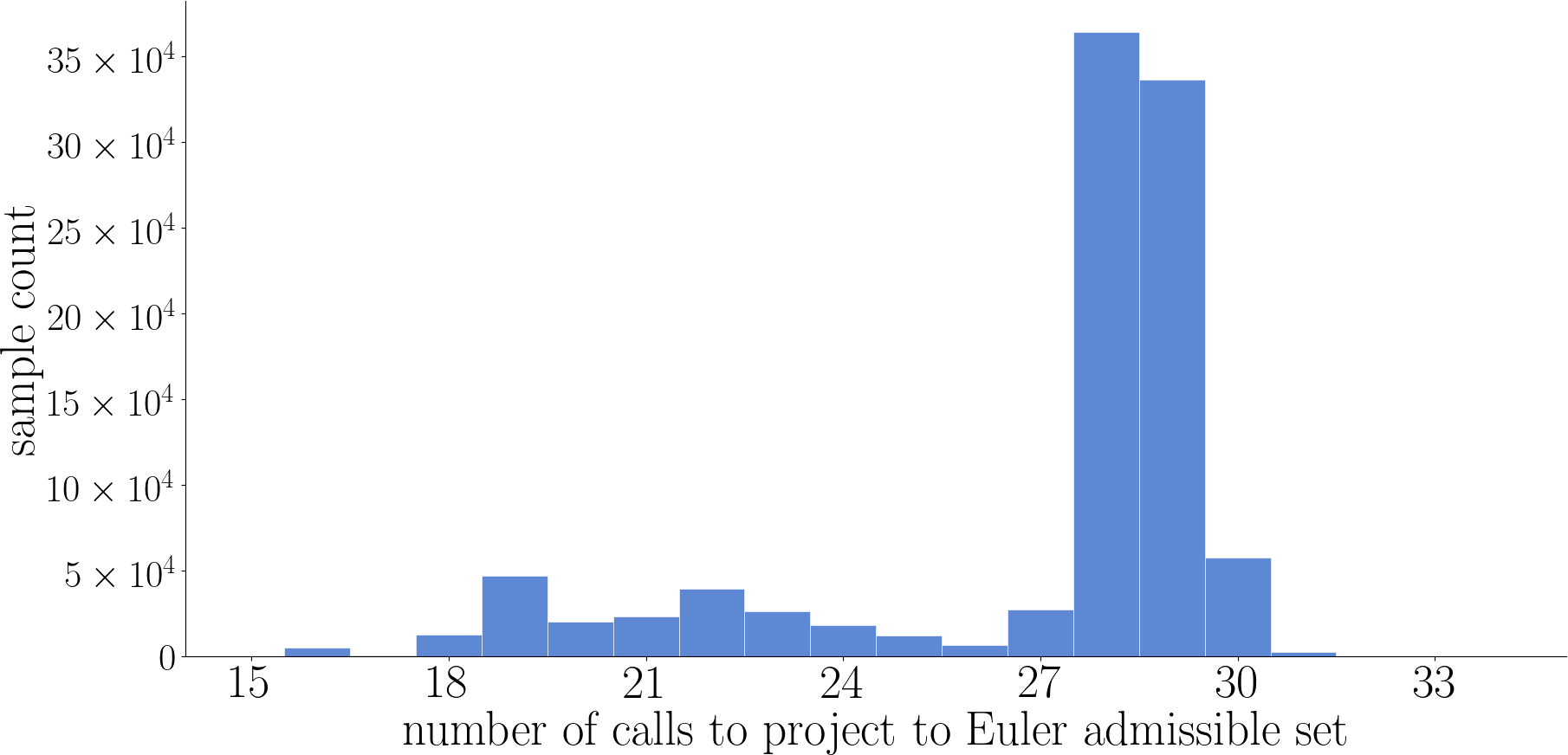} \\
\end{tabularx}
\end{center}
\caption{Histograms of the number of calls to the Euler admissible set projection per MHD projection. A total of $10^6$ cell averages outside the MHD admissible set are extracted from each astrophysical jet simulation. The $y$-axis records the number of out-of-bound cell averages that required each particular number of Euler admissible set projection calls. Left: the case of $B_0 = \sqrt{200}$. Right: the case of $B_0 = \sqrt{2000}$.}
\label{fig:slicing_efficiency}
\end{figure}

\section{Conclusions}\label{sec:conclusions}
We have developed an optimization-based cell average limiter that restores admissibility for high-order DG schemes applied to the ideal MHD equations. Our main theoretical contribution is the reduction of the projection onto the MHD admissible set $G^\varepsilon$ to a one-dimensional minimization in the magnetic energy $\beta = \norm{\vec{B}}{2}^2$, with a strictly convex and continuous reduced objective on a bounded search interval. We prove that the limiter preserves the order of accuracy, and numerical experiments show robust performance on demanding MHD benchmarks.
\par
Our limiter addresses admissibility restoration only. A remaining issue is how to combine it with a discrete divergence-free preservation scheme in a structure-preserving and conservative way. It would also be interesting to extend the framework to resistive MHD and relativistic MHD, where the admissible set has additional structure.

\section*{Acknowledgments}
The authors thank Mr. Dionysis Milesis (Boston University) for a useful discussion on estimating a lower bound of the search interval for applying the Brent method.
This work used Bridges-2 at Pittsburgh Supercomputing Center through allocation MTH260026 from the Advanced Cyberinfrastructure Coordination Ecosystem: Services \& Support (ACCESS) program, which is supported by National Science Foundation (NSF) grants \pound2138259, \pound2138286, \pound2138307, \pound2137603, and \pound2138296.

\appendix

\section{Projection onto set \texorpdfstring{$F_\beta^\varepsilon$}{F\_beta\_eps}}\label{sec:appendix_projection_F_beta} 
The derivation of the closed-form projection formula onto the compressible Euler-like admissible set follows~\cite[Appendix]{liu2025efficient} for one to three space dimensions. Here, we present the two-dimensional case for brevity. The three-dimensional case can be derived in a similar way.
\par
Given $(u, v_1, v_2, w) \notin F_\beta^\varepsilon$, we derive the projection point using the KKT conditions.
Without loss of generality, let us assume $\abs{v_1} \geq \abs{v_2}$. The resulting algorithm is summarized below.
\begin{itemize}[itemsep=3pt,leftmargin=1.75em]
\item The point $(\varepsilon, v_1, v_2, w)$ is a candidate, if $u < \varepsilon$ and $w - \frac{1}{2\varepsilon}(v_1^2 + v_2^2) \geq \varepsilon + \frac{\beta}{2}$.
\item The point $(\varepsilon, 0, 0, \varepsilon + \frac{\beta}{2})$ is a candidate, if $u < \varepsilon$, $v_1 = 0$, $v_2 = 0$, and $w < \varepsilon + \frac{\beta}{2}$.
\item If $v_1, v_2 \neq0$, solve the cubic equation $am_1^3 + (4\varepsilon^2 + \varepsilon\beta - 2\varepsilon w)m_1 - 2\varepsilon^2 v_1 = 0$ with $a = 1 + v_2^2/v_1^2$ to obtain all real roots. Examine each real root individually. 
Let $m_{1r}$ denote a real root. Then the point $(\varepsilon, m_{1r}, \frac{v_2}{v_1}m_{1r}, \frac{a}{2\varepsilon}m_{1r}^2 + \varepsilon + \frac{\beta}{2})$
is a candidate, if $\frac{v_1}{m_{1r}} > 1$ and $2\varepsilon u + a m_{1r}(v_1 - m_{1r}) < 2\varepsilon^2$. 
\item The point $(u, 0, 0, \varepsilon + \frac{\beta}{2})$ is a candidate, if $u\geq\varepsilon$, $v_1 = 0$, $v_2 = 0$, and $w < \varepsilon + \frac{\beta}{2}$.
\item Compute $\rho_1$ and $\rho_2$ using the following formulas only if they are real.    
\begin{align*}
\rho_1 &= \frac{u}{2} + \frac{1}{2} \frac{\sqrt{u^2(2v_1^2 + \frac{1}{a}(\varepsilon+\frac{\beta}{2} + u - w)^2) - 2uv_1^2(w-\varepsilon-\frac{\beta}{2}) + av_1^4}}{\sqrt{2v_1^2 + \frac{1}{a}(\varepsilon+\frac{\beta}{2} + u - w)^2}},\\
\rho_2 &= \frac{u}{2} - \frac{1}{2} \frac{\sqrt{u^2(2v_1^2 + \frac{1}{a}(\varepsilon+\frac{\beta}{2} + u - w)^2) - 2uv_1^2(w-\varepsilon-\frac{\beta}{2}) + av_1^4}}{\sqrt{2v_1^2 + \frac{1}{a}(\varepsilon+\frac{\beta}{2} + u - w)^2}}.
\end{align*}
For real values of $\rho_1$ and $\rho_2$, compute $m_{1\alpha}$ and $m_{1\beta}$ using the following formulas only if they are real.
\begin{align*}
m_{1\alpha}(\rho) &= \frac{1}{2}v_1 - \frac{1}{2a}\sqrt{-8a\rho^2 + 8au\rho + a^2 v_1^2} \\
\text{and}\quad
m_{1\beta}(\rho) &= \frac{1}{2}v_1 + \frac{1}{2a}\sqrt{-8a\rho^2 + 8au\rho + a^2 v_1^2}.
\end{align*}
Then, for real points:
\begin{itemize}[itemsep=3pt,leftmargin=1.75em]
\item The point $(\rho_1, m_{1\alpha}(\rho_1), \frac{v_2}{v_1}m_{1\alpha}(\rho_1), \varepsilon+\frac{\beta}{2} + a\frac{m_{1\alpha}(\rho_1)^2}{2\rho_1})$ is a candidate, if $\rho_1\geq\varepsilon$ and $\varepsilon+\frac{\beta}{2} + a\frac{m_{1\alpha}(\rho_1)^2}{2\rho_1} > w$.
\item The point $(\rho_2, m_{1\alpha}(\rho_2), \frac{v_2}{v_1}m_{1\alpha}(\rho_2), \varepsilon+\frac{\beta}{2} + a\frac{m_{1\alpha}(\rho_2)^2}{2\rho_2})$ is a candidate, if $\rho_2\geq\varepsilon$ and $\varepsilon+\frac{\beta}{2} + a\frac{m_{1\alpha}(\rho_2)^2}{2\rho_2} > w$.
\item The point $(\rho_1, m_{1\beta}(\rho_1), \frac{v_2}{v_1}m_{1\beta}(\rho_1), \varepsilon+\frac{\beta}{2} + a\frac{m_{1\beta}(\rho_1)^2}{2\rho_1})$ is a candidate, if $\rho_1\geq\varepsilon$ and $\varepsilon+\frac{\beta}{2} + a\frac{m_{1\beta}(\rho_1)^2}{2\rho_1} > w$.
\item The point $(\rho_2, m_{1\beta}(\rho_2), \frac{v_2}{v_1}m_{1\beta}(\rho_2), \varepsilon+\frac{\beta}{2} + a\frac{m_{1\beta}(\rho_2)^2}{2\rho_2})$ is a candidate, if $\rho_2\geq\varepsilon$ and $\varepsilon+\frac{\beta}{2} + a\frac{m_{1\beta}(\rho_2)^2}{2\rho_2} > w$.
\end{itemize}
\end{itemize}
Pick the point from the candidates that minimizes the distance to $(u, v_1, v_2, w)$, which gives the projection point.
\begin{remark}
For numerical stability, the computation of the factor $a = 1 + v_2^2/v_1^2$ must avoid division by a small denominator. Note that the two momentum components play symmetric roles. If $\abs{v_2} > \abs{v_1}$, we instead work with the variable $m_2$. The corresponding formulas are derived analogously and omitted for brevity.
\end{remark}

\section{Brent method}\label{sec:appendix_brent} 
Consider a continuous convex function $f(x)$ on a closed interval $[a,b]$. Assume that the derivative of $f$ is not available. We utilize the Brent method \cite[Section~5]{brent2013algorithms} to compute its minimizer.
Given absolute tolerance $\epsilon_\mathrm{abs} = 10^{-14}$, relative tolerance $\epsilon_\mathrm{rel} = 10^{-12}$, and golden ratio $c = \frac{3-\sqrt{5}}{2}$, the algorithm is as follows.
\begin{algorithmic}
\STATE \textbf{Input}: lower bound $a$, upper bound $b$, and the objective function $f(x)$
\STATE \textbf{Initialize}: let $s_a = a$, $s_b=b$, and $m = \frac{1}{2}(a + b)$. Set $e = 0$, $d = 0$, and
\begin{align*}
x &= a + c(b-a), & w &= x, & v &= x, \\
f_x &= f(x), & f_w &= f_x, & f_v &= f_x.
\end{align*}\vspace*{-1.0\baselineskip}
\STATE Compute tolerance $\mathtt{tol} = \epsilon_\mathrm{rel} \abs{x} + \epsilon_\mathrm{abs}$. 
If stopping criterion $\abs{x - m} < 2\mathtt{tol} - \frac{1}{2}(s_b - s_a)$ is satisfied, then return $x$. Otherwise enter into the following iterative loop until convergence.
\LOOP
\STATE \textbf{Step 1.} Compute middle point $m = \frac{1}{2}(s_a + s_b)$. If $\abs{e} > \mathtt{tol}$, compute
\begin{align*}
r &= (x-w)(f_x - f_v), \\
q &= (x-v)(f_x - f_w), \\
p &= (x-v)q - (x-w)r,  \\
q &= 2(q-r).
\end{align*}
If $q > 0$, set $p \leftarrow -p$; else set $q \leftarrow -q$. 
Store previous step $r_e \leftarrow e$ and update $e \leftarrow d$.
\STATE \textbf{Step 2.} If $\abs{p} < \frac{1}{2}\abs{q r_e}$ and $q(s_a - x) < p < q(s_b - x)$, apply inverse parabolic interpolation. Update: $d = \frac{p}{q}$ and $u = x + d$.\\
Otherwise, apply Golden section update: if $x < m$, set $e = s_b - x$; else set $e = s_a - x$. Update $d = ce$.
\STATE \textbf{Step 3.} Enforce minimum displacement: if $\abs{d} \geq \mathtt{tol}$, set $u = x + d$; else, set $u = x + \mathtt{tol}\cdot\mathrm{sgn}(d)$.
\STATE \textbf{Step 4.} Let $f_u = f(u)$. Update searching interval and interpolation points.
\STATE \hspace{1em}\textbf{if} $f_u \leq f_x$:
\STATE \hspace{2em}Update $s_b \leftarrow x$ when $u < x$; otherwise, update $s_a\leftarrow x$ when $u \geq x$. Set
\begin{align*}
v &\leftarrow w,\quad f_v \leftarrow f_w, \\
w &\leftarrow x,\quad f_w \leftarrow f_x, \\
x &\leftarrow u,\quad f_x \leftarrow f_u.
\end{align*}\vspace*{-1.25\baselineskip}
\STATE \hspace{1em}\textbf{else}
\STATE \hspace{2em}Update $s_a\leftarrow u$ when $u < x$; otherwise, update $s_b\leftarrow u$ when $u \geq x$. Set
\begin{align*}
\text{if}~ f_u \leq f_w ~\text{or}~ w = x\!:&\quad (v,f_v) \leftarrow (w,f_w),\quad (w,f_w) \leftarrow (u,f_u), \\
\text{if}~ f_u \leq f_v ~\text{or}~ v = x ~\text{or}~ v = w\!:&\quad (v,f_v) \leftarrow (u,f_u).
\end{align*}
\vspace*{-1.25\baselineskip}
\STATE \hspace{1em}\textbf{end if}
\ENDLOOP
\end{algorithmic}

\bibliographystyle{siamplain}
\bibliography{references}
\end{document}